\newtheorem{thm}{Theorem}[section]
\newtheorem{cor}[thm]{Corollary}
\newtheorem{prop}[thm]{Proposition}
\newtheorem{lem}[thm]{Lemma}
\newcommand{\gr}{\operatorname{gr}}
\theoremstyle{definition}
\newtheorem{defn}[thm]{Definition}
\newtheorem{exas}[thm]{Example}
\let\phi\varphi
\begin{document}
\title{Realizing ultragraph Leavitt path algebras as Steinberg algebras}

\maketitle
\begin{center}
R.~Hazrat\footnote{Centre for Research in Mathematics and Data Science, Western Sydney University, Australia. E-mail address: \texttt{r.hazrat@westernsydney.edu.au}}	and
T.\,G.~Nam\footnote{Institute of Mathematics, VAST, 18 Hoang Quoc Viet, Cau Giay, Hanoi, Vietnam. E-mail address: \texttt{tgnam@math.ac.vn} 
		
	\ \ {\bf Acknowledgements}:   The second author was partially supported by the Vietnam Academy of Science and
	Technology grant CT0000.02/20-21.
}
	\end{center}
	
\begin{abstract} In this article, we realize ultragraph Leavitt path algebras as Steinberg algebras. This realization allows us to use the groupoid approach to obtain structural results about these algebras. Using skew product groupoid, we show that  ultragraph Leavitt path algebras are graded von Neumann regular rings. 
We characterize strongly graded ultragraph Leavitt path algebras and show that every ultragraph Leavitt path algebra is semiprimitive.  
Moreover, we characterize irreducible representations of ultragraph Leavitt path algebras.  We also show that ultragraph Leavitt path algebras can be realized as Cuntz-Pimsner rings.	
\medskip

\textbf{Mathematics Subject Classifications 2020}: 16S88, 16S99,  05C25
		
\textbf{Key words}: Ultragraph Leavitt path algebras; Steinberg algebras; Strongly graded algebras; Irreducible representations; Semiprimitivity; Cuntz-Pimsner rings.
\end{abstract}

\section{Introduction}
The study of algebras associated to combinatorial objects has attracted a great deal of attention in the past years. Part of the interest in these algebras arise from the fact that many properties of the combinatorial object translate
into algebraic properties of the associated algebras and their applications to symbolic dynamics. There have been interesting examples of algebras associated to combinatorial objects among which we mention, for example, the following ones: graph $C^*$-algebras, Leavitt path algebras, higher rank graph algebras, Kumjian-Pask algebras, ultragraph $C^*$-algebras (we refer the reader to \cite{a:lpatfd} and \cite{AAS} for a more comprehensive list).

Ultragraphs were defined by Mark Tomforde in \cite{tomf:auatelaacaastg03} as an unifying approach to Exel-Laca and graph $C^*$-algebras. They have proved to be a key ingredient in the study of Morita equivalence of Exel-Laca and graph $C^*$-algebras \cite{kmst:gaelaauacutme}. Recently, Gon\c{c}alvas and Royer have established interesting connections between ultragraph $C^*$-algebras and the symbolic dynamics of shift spaces over infinite alphabets (see \cite{gr:uassoia} and \cite{gr:iaessvuatca}).

The Leavitt path algebra associated to an ultragraph was defined by Imanfar, Pourabbas and Larki in \cite{ima:tlpaou}, along with a study of graded ideal structures and a proof of a Cuntz-Krieger uniqueness type theorem. Furthermore, it was shown in \cite{ima:tlpaou} that these algebras provide examples of algebras that can not be realized as the Leavitt path algebra of a graph; that is, the class of ultragraph path algebras is strictly larger than the class of Leavitt path algebras of graphs.
This raises the question of which results about Leavitt path
algebras of graphs can be generalized to ultragraph path algebras, and whether results from the $C^*$-algebraic setting can be proved in the algebraic level. 
Recently a number of interesting results have been obtained  among which we mention, for example, the following ones. Gon\c{c}alvas and Royer \cite{gr:iaproulpa} extended to ultragraph Leavitt path algebras Chen's construction (see \cite{c:irolpa}) of irreducible representations of graph Leavitt path algebras; and in \cite{gr:saccfulpavpsgrt} they realized ultragraph  Leavitt path algebras as partial skew group rings. Using this realization they characterized artinian ultragraph Leavitt path algebras and gave simplicity criteria for these algebras. The current article
is a continuation of this direction. Building from ideas in \cite{cs:eghmesa}, where Leavitt path algebras are realized as Steinberg algebras, we realize ultragraph Leavitt path algebras as Steinberg algebras (Theorem~\ref{ULPAisSteinAl}). This is also the algebraic version of the characterization of ultragraph $C^*$-algebras as groupoid $C^*$-algebras given in \cite{mm:gaispouca}.

Steinberg algebras were introduced in \cite{stein:agatdisa} in the context of discrete inverse semigroup algebras and independently in \cite{cfst:aggolpa} as a model for Leavitt path algebras. They can be seen as discrete analogs of groupoid $C^*$-algebras, which were introduced earlier
(see, e.g., \cite{p:gisatoa, r:agatca}). This class of algebras includes group algebras, inverse semigroup
algebras and Leavitt path algebras. In recent years, there has been a lot of work around Steinberg algebras and in particular regarding their simplicity (see, e.g., \cite{bcfs:soaateg, ce:utfsa, stein:spasoegawatisa}), semiprimitivity (see, e.g, \cite{stein:spasoegawatisa}), irreducible representations (see, e.g., \cite{stein:agatdisa}), and realizing Steinberg algebras as Cuntz-Pimsner rings (see \cite{cfhl:zgracpr}). Using these results and Theorem~\ref{ULPAisSteinAl} we prove that every ultragraph Leavitt path algebra is semiprimitive (Theorem~\ref{Semiprimitivity}), and characterize irreducible representations of ultragraph Leavitt path algebras (Theorem~\ref{Irrrep4}). We provide a groupoid approach to the sufficient part of \cite[Theorem 4.8]{gr:saccfulpavpsgrt} (Theorem~\ref{Simplicity3}), which gives necessary and sufficient conditions for an ultragraph Leavitt path algebra to be simple. Moreover, using groupoid skew product, we show that  ultragraph Leavitt path algebras are graded von Neumann regular rings (Theorems~\ref{grregular1} and \ref{grregular2}).  Finally, we obtain that ultragraph Leavitt path algebras can be realized as Cuntz-Pimsner rings (Theorem~\ref{ULPAasCP-ring2}).	

\section{Preliminaries}
A \textit{groupoid} is a small category in which every morphism is invertible. It can also be viewed as a generalization of a group which has a partial binary operation. 
Let $\mathcal{G}$ be a groupoid. If $x \in \mathcal{G}$, $s(x) = x^{-1}x$ is the source of $x$ and $r(x) = xx^{-1}$ is its range. The pair $(x,y)$ is is composable if and only if $r(y) = s(x)$. The set $\mathcal{G}^{(0)}:= s(\mathcal{G}) = r(\mathcal{G})$ is called the \textit{unit space} of $\mathcal{G}$. Elements of $\mathcal{G}^{(0)}$ are units in the sense that $xs(x) = x$ and
$r(x)x = x$ for all $x\in \mathcal{G}$. For $U, V \subseteq \mathcal{G}$, we define
\begin{center}
	$UV = \{\alpha\beta \mid \alpha\in U, \beta\in V \text{\, and \,} r(\beta) = s(\alpha)\}$ and $U^{-1} = \{\alpha^{-1}\mid \alpha\in U\}$.
\end{center}

A \textit{topological groupoid} is a groupoid endowed with a topology under which the inverse map is continuous, and such that the composition is continuous with respect to the relative topology on $\mathcal{G}^{(2)} := \{(\beta, \gamma)\in \mathcal{G}^2\mid s(\beta) = r(\gamma)\}$ inherited from $\mathcal{G}^2$. An \textit{\'{e}tale groupoid} is a topological groupoid $\mathcal{G},$ whose unit space  $\mathcal{G}^{(0)}$ is locally compact Hausdorff, and such that the domain map $s$ is a local homeomorphism. In this case, the range map $r$ and the multiplication map are local homeomorphisms and  $\mathcal{G}^{(0)}$ is open in $\mathcal{G}$ \cite{r:egatq}. 

An \textit{open bisection} of $\mathcal{G}$ is an open subset $U\subseteq \mathcal{G}$ such that $s|_U$ and $r|_U$ are homeomorphisms onto an open subset of $\mathcal{G}^{(0)}$. Similar to \cite[Proposition 2.2.4]{p:gisatoa} we have that $UV$ and $U^{-1}$ are compact open bisections for all compact open bisections $U$ and $V$ of an  \'{e}tale groupoid  $\mathcal{G}$.
If in addition $\mathcal{G}$  is Hausdorff, then $U\cap V$ is a compact open bisection (see \cite[Lemma 1]{r:tgatlpa}). An \'{e}tale groupoid $\mathcal{G}$ is called \textit{ample} if  $\mathcal{G}$ has a base of compact open bisections for its topology. 

\subsection{Steinberg algebras} Steinberg algebras were introduced in \cite{stein:agatdisa} in the context of discrete inverse semigroup algebras and independently in \cite{cfst:aggolpa} as a model for Leavitt path algebras. We recall the notion of the Steinberg algebra
as a universal algebra generated by certain compact open subsets of a Hausdorff ample groupoid. Let $\mathcal{G}$ be an ample groupoid, and $K$ a field with the discrete topology. We denote by $K^{\mathcal{G}}$ the set of all functions from $\mathcal{G}$ to $K$. Canonically,  $K^{\mathcal{G}}$ has the structure of a $K$-vector space with
operations defined pointwise.

\begin{defn} 
Let $\mathcal{G}$ be an ample groupoid, and $K$ any field.   Let $A_K(\mathcal{G})$ be the $K$-vector subspace of $K^{\mathcal{G}}$ generated by the set
\begin{center}
$\{1_U\mid U$ is a compact open bisection of $\mathcal{G}\}$,
\end{center}
where $1_U: \mathcal{G}\longrightarrow K$ denotes the characteristic function on $U$. The \textit{multiplication} of $f, g\in A_K(\mathcal{G})$ is given by the convolution
\[(f\ast g)(\gamma)= \sum_{\gamma = \alpha\beta}f(\alpha)g(\beta)\] for all $\gamma\in \mathcal{G}$. The $K$-vector subspace $A_K(\mathcal{G})$, with convolution, is called the \textit{Steinberg algebra} of $\mathcal{G}$ over $K$.
\end{defn}

It is useful to note that $1_U\ast 1_V = 1_{UV}$ for compact open bisections $U$ and $V$. In particular, $1_U\ast 1_V = 1_{U\cap V}$
whenever $U$ and $V$ are compact open subsets of $\mathcal{G}^{(0)}$ (see \cite[Proposition 4.5]{stein:agatdisa}).

\subsection{Graded rings} \label{gradedringsec}
Let $\Gamma$ be a group with identity $\epsilon$. A ring $A$ (possibly without unit) is called a
\textit{$\Gamma$-graded ring} if $A = \bigoplus_{\gamma\in \Gamma}A_{\gamma}$, where each $A_{\gamma}$ is an additive subgroup of $A$ and $A_{\gamma}A_{\delta}\subseteq A_{\gamma\delta}$ for all $\gamma, \delta\in \Gamma$. By definition, $A_{\gamma}A_{\delta}$ is the additive subgroup generated by all terms $a_{\gamma}a_{\delta}$, where $a_{\gamma} \in A_{\gamma}$ and $a_{\delta} \in A_{\delta}$. The group $ A_{\gamma}$ is called the \textit{$\gamma$-homogeneous component} of $A$, and the nonzero elements of $A_{\gamma}$ are called \textit{homogeneous of degree} $\gamma$. If $a\in A$, we write
$a = \sum_{\gamma \in \Gamma}a_{\gamma}$ for the unique expression of $a$ as a sum of homogeneous terms $a_{\gamma} \in A_{\gamma}$. If $A$ is a $K$-algebra over a field $K$, then $A$ is called a $\Gamma$-graded algebra if it is a $\Gamma$-graded ring and each $A_{\gamma}$ is a $K$-subspace of $A$. A \textit{graded homomorphism} of $\Gamma$-graded rings is a homomorphism $f: A\longrightarrow B$ such that $f(A_{\gamma})\subseteq B_{\gamma}$ for all $\gamma\in \Gamma$.
If a $\Gamma$-graded ring $A = \bigoplus_{\gamma\in \Gamma}A_{\gamma}$ has the property that $A_{\gamma}A_{\delta}= A_{\gamma\delta}$ for all $\gamma, \delta\in \Gamma$, then $A$ is called \textit{strongly $\Gamma$-graded}.

For a $\Gamma$-graded ring $A$ (possibly without unit), the \emph{smash product} ring $A\#\Gamma$
is defined as the set of all formal sums $\sum_{\gamma \in \Gamma} r^{(\gamma)} p_\gamma $,
where $r^{(\gamma)}\in A$ and $p_\gamma$ are symbols. Addition is defined component-wise
and multiplication is defined by linear extension of the rule
$(rp_{\alpha})(sp_{\beta})=rs_{\alpha\beta^{-1}}p_{\beta},$  where $r,s\in A$ and $\alpha,\beta\in\Gamma$. Here $s_{\alpha\beta^{-1}}$ is the ${\alpha\beta^{-1}}$-homogeneous component of the element $s$. 

A ring  $A$ is called \emph{von Neumann regular} (or regular for short), if for any $x \in A$, we have $x \in xAx$. These rings have very rich structures and Goodearl's book~\cite{goodearlbook} is devoted to this class of rings. The graded ring $A$ is called  \emph{graded von Neumann regular} (or graded regular for short), if for any homogeneous element $x\in A$, we have $x\in xAx$. In \S\ref{skewprodgraph} we will use the fact that for a $\Gamma$-graded ring $A$, 
 $A\# \Gamma$ is graded von Neumann regular if and only if $A$ is graded von Neumann regular (see \cite[Lemma~2.3]{ahls:gsaatr}) to obtain structural results on ultragraph Leavitt path algebras.

\subsection{Graded groupoids and graded Steinberg algebras} Let $\Gamma$ be a group with identity $\epsilon$ and $\mathcal{G}$ a topological groupoid.
The groupoid $\mathcal{G}$ is called \textit{$\Gamma$-graded} if
$\mathcal{G}$ can be partitioned by clopen subsets indexed by $\Gamma$, i.e, $\mathcal{G} = \bigsqcup_{\gamma\in \Gamma}\mathcal{G}_{\gamma}$, such that $\mathcal{G}_{\gamma}\mathcal{G}_{\delta}\subseteq \mathcal{G}_{\gamma\delta}$ for all $\gamma, \delta\in \Gamma$.
The set $\mathcal{G}_{\gamma}$ is called the \textit{$\gamma$-component} of $\mathcal{G}$. We say a subset $X\subseteq \mathcal{G}$ is \textit{$\gamma$-homogeneous} if 
$X\subseteq \mathcal{G}_{\gamma}$. Equivalently, $\mathcal{G}$ is \textit{$\Gamma$-graded} if there is a continuous functor $\kappa: \mathcal{G}\longrightarrow \Gamma$, where $\Gamma$ is
regarded as a discrete group. To match the definition of $\Gamma$-grading, from the previous paragraph, one defines
$\mathcal{G}_{\gamma} = \kappa^{-1}(\gamma)$. We say that the graded groupoid $\Gamma$ is \textit{strongly $\Gamma$-graded} if $\mathcal{G}_{\gamma}\mathcal{G}_{\delta}= \mathcal{G}_{\gamma\delta}$ for all $\gamma, \delta\in \Gamma$.

Let $\mathcal{G}$ be a $\Gamma$-graded ample groupoid, and $K$ any field. Then by \cite[Lemma 3.1]{cs:eghmesa},  $A_K(\mathcal{G})$ is a $\Gamma$-graded $K$-algebra with homogeneous components
\begin{center}
$A_K(\mathcal{G})_{\gamma} = \{1_U\mid U$ is a $\gamma$-homogeneous compact open bisection of $\mathcal{G}\}$.
\end{center}

\section{Ultragraph groupoids}
In this section, based on ultragraph groupoids described in \cite{mm:gaispouca}, we realize ultragraph Leavitt path algebras as Steinberg algebras (Theorem~\ref{ULPAisSteinAl}). Consequently, we obtain criteria for an ultragraph Leavitt path algebra to be strongly graded (Corollary~\ref{strgradedULPA}), and provide a criterion for the unit space of an ultragraph groupoid to be compact (Corollary~\ref{uspacecomp}).

We begin this section by recalling some notions and notes of ultragraph theory introduced by Tomforde in \cite{tomf:auatelaacaastg03} and \cite{tomf:soua}.

An \textit{ultragraph} $\mathcal{G} = (G^0, \mathcal{G}^1, r, s)$ consists of a countable set of vertices $G^0$, a countable set of edges $\mathcal{G}^1$, and functions $s : \mathcal{G}^1 \longrightarrow G^0$ and $r : \mathcal{G}^1 \longrightarrow \mathcal{P}(G^0)\setminus \left\{ \varnothing \right\}$, where $\mathcal{P}(G^0)$ denotes the set of all subsets of $G^0$.

A vertex $v \in G^0$ is called a \textit{sink} if $s^{-1}(v)=\varnothing$ and $v$ is called an \textit{infinite emitter} if $|s^{-1}(v)|=\infty$. A \textit{singular vertex} is a vertex that is either a sink or an infinite emitter.  
A vertex $v \in G^0$ is called a \textit{regular vertex} if $0<|s^{-1}(v)|<\infty$. 

For an ultragraph $\mathcal{G} = (G^0, \mathcal{G}^1, r, s)$ we let $\mathcal{G}^0$ denote the smallest subset of $\mathcal{P}(G^0)$ that contains $\{v\}$ for all $v\in G^0$, contains $r(e)$ for all $e\in \mathcal{G}^1$, and is closed under finite unions and finite intersections. 

A \textit{finite path} in an ultragraph $\mathcal{G}$ is either an element of $\mathcal{G}^0$ or a sequence $e_1 e_2\cdots e_n$ of edges with $s(e_{i+1})\in r(e_i)$ for all $1\le i\le n-1$ and we say that the path $\alpha=e_1 e_2\cdots e_n$ has \textit{length} $|\alpha| := n$. We consider the elements of $\mathcal{G}^0$ to be paths of length $0$. We denote by $\mathcal{G}^*$ the set of all finite paths in $\mathcal{G}$. The maps $r$ and $s$ extend naturally to $\mathcal{G}^*$. Note that when $A\in \mathcal{G}^0$ we define $s(A) = r(A) = A$. An \textit{infinite path} in $\mathcal{G}$  is a sequence $e_1e_2\cdots e_n\cdots$ of edges in $\mathcal{G}$ such that $s(e_{i+1})\in r(e_i)$ for all $i\geq 1$. The set of all infinite paths in $\mathcal{G}$ is denoted by $\mathfrak{p}^{\infty}$. For $p = e_1e_2\cdots e_n\cdots \in \mathfrak{p}^{\infty}$ and $n\in \mathbb{N}$, we denote by $\tau_{\le n}(p)$ the finite path $e_1e_2\cdots e_n$, while we
denote by $\tau_{> n}(p)$ the infinite path $e_{n+1}e_{n+2}\cdots$.

If $\mathcal{G}$ is an ultragraph, then a \textit{cycle based at} $v$ in $\mathcal{G}$ is a path $\alpha=e_1e_2\cdots e_{|\alpha|}\in \mathcal{G}^*$ with $|\alpha|\ge 1$, $s(\alpha) = v$ and $v \in r(\alpha)$. An \textit{exit} for a cycle $\alpha$ is one of the following:
\begin{itemize} 
\item[(1)] an edge $e\in \mathcal{G}^1$ such that there exists an $i$ for which $s(e)\in r(e_i)$ but $e\ne e_{i+1}$.
	
\item[(2)] a sink $w$ such that $w\in r(e_i)$ for some $i$. 
\end{itemize}

In \cite{tomf:auatelaacaastg03} Mark Tomforde  introduced  the $C^*$-algebra of an ultragraph as an unifying approach to Exel-Laca and graph $C^*$-algebras. Imanfar, Pourabbas and  Larki in \cite{ima:tlpaou}, introduced the Leavitt path algebra of an ultragraph, along with a study of ideals and a proof of a
Cuntz-Krieger uniqueness type theorem.

\begin{defn}[{cf. \cite[Theorem 2.11]{tomf:auatelaacaastg03} and \cite[Definition 2.1]{ima:tlpaou}}]\label{utraLevittpathalg}
Let $\mathcal{G}$ be an ultragraph and $K$ a field. The \textit{Leavitt path algebra $L_K(\mathcal{G})$ of $\mathcal{G}$ with coefficients in $K$} is the $K$-algebra generated by the set $\{s_e, s^*_{e}\mid e\in\mathcal{G}^1\}$ $\cup\left\{p_{_A}\mid A\in\mathcal{G}^0\right\}$, satisfying the following relations for all $A,B \in \mathcal{G}^0$ and  $e, f\in\mathcal{G}^1$:
\begin{itemize} 	
\item[(1)] $p_{_\varnothing}=0, p_{_A}p_{_B} = p_{_{A\cap B}}$ and $p_{_{A\cup B}} = p_{_A}+p_{_B}-p_{_{A\cap B}}$;
\item[(2)] $p_{s(e)}s_e = s_e = s_ep_{r(e)}$ and $p_{r(e)}s_{e}^* = s_{e}^* = s_{e}^*p_{s(e)}$;
\item[(3)] $s_{e}^*s_f = \delta_{e, f}p_{r(e)}$;
\item[(4)] $p_v = \sum_{s(e)=v}s_es_{e}^*$ for any regular vertex $v$;
\end{itemize} where $p_v$ denotes $p_{_{\{v\}}}$ and $\delta$ is the Kronecker delta.
\end{defn}

We usually denote $s_A := p_{_A}$ for $A\in\mathcal{G}^0$ and $s_\alpha := s_{e_1}\cdots s_{e_n}$ for $\alpha = e_1\cdots e_n \in \mathcal{G}^*$. It is easy to see that the mappings given by $p_{_A}\longmapsto p_{_A}$ for $A\in \mathcal{G}^0$, and $s_e\longmapsto s_e$, $s^*_e\longmapsto s^*_e$ for $e\in \mathcal{G}^1$, produce an involution on the algebra $L_K(\mathcal{G})$, and for any path $\alpha = \alpha_1\cdots\alpha_n$ there exists $s^*_{\alpha} := s^*_{e_n}\cdots s^*_{e_1}$. Also, $L_K(\mathcal{G})$ has the following \textit{universal property}: if $\mathcal{A}$ is a $K$-algebra generated by a family of elements $\{b_A, c_e, c^*_e\mid A\in \mathcal{G}^0, e\in \mathcal{G}^1\}$ satisfying the relations analogous to (1) - (4)  in Definition~\ref{utraLevittpathalg}, then there always exists a $K$-algebra homomorphism $\varphi: L_K(\mathcal{G})\longrightarrow \mathcal{A}$ given by $\varphi(p_A) = b_A$, $\varphi(s_e) = c_e$ and $\varphi(s^*_e) = c^*_e$.	Moreover, in \cite[Theorem~2.9]{ima:tlpaou} the authors showed that $L_K(\mathcal{G})$  has a canonical $\mathbb{Z}$-graded structure with homogeneous components

\begin{center}	
$L_K(\mathcal{G})_n=\textnormal{Span}_K\{s_{\alpha}p_{_A}s_{\beta}^*\mid\alpha,\beta \in \mathcal{G}^*, A\in \mathcal{G}^0 \textnormal{ and } |\alpha| -|\beta|=n\}$.
\end{center}

For any $n\geq 1$, we define $$\mathfrak{p}^n := \{(\alpha, A)\mid \alpha\in \mathcal{G}^*, \, |\alpha| = n, \, A\in \mathcal{G}^0, \, A \subseteq r(\alpha)\}.$$ We specify that $(\alpha, A) = (\beta, B)$ if and only if $\alpha = \beta$ and $A= B$. We set $\mathfrak{p}^0 = \mathcal{G}^0$ and we let $\mathfrak{p} = \sqcup_{n\geq 0}\mathfrak{p}^n$. We define the length of a pair $(\alpha, A)$, denoted $|(\alpha, A)|$, to be the length of $\alpha$. We call $\mathfrak{p}$ the \textit{ultragraph space} associated with $\mathcal{G}$ and the elements of $\mathfrak{p}$ are called \textit{ultrapaths}. We may extend the range map $r$ and the source map $s$ to $\mathfrak{p}$ by the setting $r((\alpha, A)) = A$ and $s((\alpha, A)) = s(\alpha)$. Each  $A\in \mathcal{G}^0$ is regarded as an ultrapath of length zero and we define $r (A) = s(A) = A$. It
will be convenient to embed $\mathcal{G}^*$ in $\mathfrak p$ by sending $\alpha$ to $(\alpha, r(\alpha))$, if $|\alpha|\geq 1$, and by sending $A$ to $A$ for all $A\in \mathcal{G}^0$. 

We treat $\mathfrak{p}$ like a small 
category and say that a product $x \cdot y$ is defined only when $r(x)\cap s(y) \neq \varnothing$. Namely, if $x = (\alpha, A)$ and $y = (\beta, B)$, then $x \cdot y$ is defined if and only if $s(\beta)\in A$, and in this case $x \cdot y := (\alpha\beta, B)$. Also we specify that

\begin{align*}x \cdot y= \begin{cases} x \cap y &\textnormal{if }  x, y\in \mathcal{G}^0 \textnormal{ and } x\cap y \neq \varnothing, \\ y & \textnormal{if } x\in \mathcal{G}^0,\, |y|\geq 1, \textnormal{ and } x\cap s(y) \neq \varnothing \\ (\alpha, A\cap y) &\textnormal{if } y\in \mathcal{G}^0,\, |x| = |(\alpha, A)|\geq 1,  \textnormal{ and } r(x)\cap y \neq \varnothing.\end{cases}\end{align*} 

We extend the source map $s$ to $\mathfrak{p}^{\infty}$, by defining $s(\gamma) = s(e_1)$, where $\gamma = e_1e_2\cdots$ We may concatenate pairs in $\mathfrak{p}$, with infinite paths in 
$\mathfrak{p}^{\infty}$ as follows. If $y = (\alpha, A)\in \mathfrak{p}$, and if $\gamma = e_1e_2\cdots\in \mathfrak{p}^{\infty}$ are such that $s(\gamma)\in r(y) = A$, then the expression $y\cdot \gamma$ is defined to be $\alpha\gamma = \alpha e_1e_2\cdots\in \mathfrak{p}^{\infty}$. If $y = A\in \mathcal{G}^0$, we define $y\cdot \gamma = A\cdot \gamma = \gamma$ whenever $s(\gamma)\in A$. Of course $y\cdot \gamma$ is not defined if $s(\gamma) \notin r(y) = A$.

We note that $\mathcal{G}^0$ is an idempotent inverse semigroup in its own under intersetion. A \textit{filter} in $\mathcal{G}^0$ is a subsemigroup $E$ of $\mathcal{G}^0$ with the property that $\varnothing \notin E$ and if $A\in X$, and $A \subseteq B$, then $B\in X$. 
Each element $A\in \mathcal{G}^0$ determines a principal filter, denoted $\widetilde{A}$, which is the set given by: $\widetilde{A} =\{B\in \mathcal{G}^0\mid A \subseteq B\}$. 
An \textit{ultrafilter} is a filter which is not properly contained in any filter. We denote by $\mathcal{U}(\mathcal{G}^0)$  the collection of all sets in $\mathcal{G}^0$ whose principal filter in $\mathcal{G}^0$ is also an ultrafilter over $\mathcal{G}^0$. We note that $\mathcal{U}(\mathcal{G}^0)$
contains every singleton set determined by the vertices in $G^0$. We shall call the elements in $\mathcal{U}(\mathcal{G}^0)$ \textit{ultrasets}.

Following \cite[Definition~17]{mm:gaispouca}, for each subset $A$ of $G^0$, let $\epsilon(A)$ be the set $\{e\in \mathcal{G}^1\mid s(e)\in A\}$. We shall say that a set $A$ in $\mathcal{G}^0$ is an \textit{infinite emitter} if $\epsilon(A)$ is infinite.

We now describe the ultragraph groupoid associated to an ultragraph.  Let $\mathcal{G} = (G^0, \mathcal{G}^1, r, s)$ be an ultragraph without sinks. Define
\begin{center}
$Y_{\infty} := \{y\in \mathfrak{p}\mid r(y)$ is an ultraset emitting infinitely many edges$\}$.
\end{center}	Let $$X_{\mathcal{G}}: = Y_{\infty}\cup \mathfrak{p}^{\infty}$$ and 
$$\mathfrak{G}_{_\mathcal{G}} :=\{(x\cdot \mu, |x| - |y|, y\cdot \mu)\mid x, y\in \mathfrak{p}, \mu\in X_{\mathcal{G}}, r(x) = r(y), x\cdot \mu, y\cdot \mu \in X_{\mathcal{G}}\}.$$ 

We view each $(x\cdot \mu, |x| - |y|, y\cdot \mu)\in \mathfrak{G}_{_\mathcal{G}}$ as a morphism with range $x\cdot\mu$ and source $y\cdot\mu$. The formulas $(x\cdot \mu, |x| - |y|, y\cdot \mu) (y\cdot \mu, |y| - |y'|, y'\cdot \mu) = (x\cdot\mu, |x| + |y'|, y'\cdot\mu)$ and $(x\cdot \mu, |x| - |y|, y\cdot \mu)^{-1} = (y\cdot \mu, |y| - |x|, x\cdot \mu)$ define composition and inverse maps on $\mathfrak{G}_{_\mathcal{G}}$ making it a groupoid with $\mathfrak{G}_{_\mathcal{G}}^{(0)} = \{(\mu, 0, \mu)\mid \mu\in X_{\mathcal{G}}\}$ which we identify with the set $X_{\mathcal{G}}$ by the map $(\mu, 0, \mu)\longmapsto \mu$.
We note that $r_{\mathfrak{G}_{_\mathcal{G}}}$ and $s_{\mathfrak{G}_{_\mathcal{G}}}: \mathfrak{G}_{_\mathcal{G}}\longrightarrow \mathfrak{G}_{_\mathcal{G}}^{(0)}$ are the range and source maps defined respectively by: $r_{\mathfrak{G}_{_\mathcal{G}}}(x\cdot \mu, |x| - |y|, y\cdot \mu) = (x\cdot\mu, 0, x\cdot\mu)$ and $s_{\mathfrak{G}_{_\mathcal{G}}}(x\cdot \mu, |x| - |y|, y\cdot \mu) = (y\cdot\mu, 0, y\cdot\mu)$ for all $(x\cdot \mu, |x| - |y|, y\cdot \mu)\in \mathfrak{G}_{_\mathcal{G}}$.  

We next describe a topology on $\mathfrak{G}_{_\mathcal{G}}$. For $x\in \mathfrak{p}$, a finite subset $K$ of edges emitted by $r(x)$, and a finite subset $Q$ of $\mathcal{G}^0$, we define

\[\mathcal{A}(x,x) = \{(x\cdot \mu, 0, x\cdot \mu)\mid \mu\in X_{\mathcal{G}},\,  x\cdot \mu\in X_{\mathcal{G}}\}\] and 

\[\mathcal{A}(x,x, K, Q) = \mathcal{A}(x,x)\setminus (\bigcup_{e\in K}\mathcal{A}(x\cdot e,x\cdot e) \cup  \bigcup_{C\in Q}\mathcal{A}(xC,xC)).\]

For $x, y\in \mathfrak{p}$ with $r(x) = r(y)$, a finite subset $K$ of edges emitted by $r(x)$, and a finite subset $Q$ of $\mathcal{G}^0$, we define

\[\mathcal{A}(x,y) = \{(x\cdot \mu, |x| - |y|, y\cdot \mu)\mid \mu\in X_{\mathcal{G}},\,  x\cdot \mu,  y\cdot \mu\in X_{\mathcal{G}}\}\] and 

\[\mathcal{A}(x,y, K, Q) = \mathcal{A}(x,y)\setminus (\bigcup_{e\in K}\mathcal{A}(x\cdot e,y\cdot e) \cup  \bigcup_{C\in Q}\mathcal{A}(xC,yC)).\] The sets $\mathcal{A}(x,x, K, Q)$ constitute a basis of compact open sets for a locally compact Hausdorff topology on $\mathfrak{G}_{_\mathcal{G}}^{(0)}$, and the sets $\mathcal{A}(x,y, K, Q)$ constitute a basis of compact open bijections for a topology under which $\mathfrak{G}_{_\mathcal{G}}$ is a Hausdorff ample groupoid (refer to \cite[Theorem 15]{mm:gaispouca}). Moreover, $\mathfrak{G}_{_\mathcal{G}}$ comes with a canonical $\mathbb{Z}$-grading given by the functor $\kappa: \mathfrak{G}_{_\mathcal{G}}\longrightarrow \mathbb{Z}$ defined by $\kappa(x, k, x') = k$, $k\in \mathbb{Z}$.

The following result provides us with a criterion for an ultragraph groupoid to be strongly graded.
\begin{prop}\label{strgradedgroupoid}
For any ultragraph $\mathcal{G}$ without sinks, the ultragraph groupoid $\mathfrak{G}_{_\mathcal{G}}$ is strongly $\mathbb{Z}$-graded if and only if the following conditions hold:

$(1)$ $\mathcal{G}$ has no infinite emitters;

$(2)$ For every $k\in \mathbb{N}$ and every infinite path $p\in \mathfrak{p}^{\infty}$, there exists an initial subpath $x$ of $p$
and a finite path $y\in \mathcal{G}^*$ such that $r(x) = r(y)$ and $|y|- |x|= k$.
\end{prop}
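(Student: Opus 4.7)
The plan is to reduce strong $\mathbb{Z}$-gradedness of $\mathfrak{G}_{_\mathcal{G}}$ to an existence criterion at each unit. I would first record the standard observation that a $\mathbb{Z}$-graded groupoid $\mathfrak{H}$ is strongly $\mathbb{Z}$-graded if and only if, for every unit $u \in \mathfrak{H}^{(0)}$ and every $n \in \mathbb{Z}$, there exists a morphism of degree $n$ with source $u$. Indeed, given any $h \in \mathfrak{H}_{m+n}$ and such a $g \in \mathfrak{H}_{n}$ with $s(g) = s(h)$, one may write $h = (hg^{-1})\, g \in \mathfrak{H}_{m}\, \mathfrak{H}_{n}$. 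Using the $\mathbb{Z}$-grading $\kappa$ on $\mathfrak{G}_{_\mathcal{G}}$ described above, this reduces the proof to verifying the existence property at each unit of $X_{\mathcal{G}}$.

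For sufficiency ($\Leftarrow$), condition~(1) forces $Y_\infty = \varnothing$, so $X_{\mathcal{G}} = \mathfrak{p}^\infty$ and every unit is an infinite path $p$. Fix $n \in \mathbb{Z}$. For $n = -k \leq 0$, the triple $(r(\tau_{\leq k}(p)),\, \tau_{\leq k}(p),\, \tau_{> k}(p))$ (playing the roles of $x$, $y$, $\mu$ in the definition of $\mathfrak{G}_{_\mathcal{G}}$) yields a degree-$(-k)$ morphism $(\tau_{> k}(p), -k, p)$ with source $p$, since the range equality $r(x) = r(y) = r(\tau_{\leq k}(p))$ and the product formulas give $y \cdot \mu = p$ and $x \cdot \mu = \tau_{> k}(p)$. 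For $n = k > 0$, condition~(2) produces an initial subpath $x_0$ of $p$ of length $m$ and a finite path $y_0 \in \mathcal{G}^{*}$ of length $m+k$ with $r(x_0) = r(y_0)$; then the triple $(y_0,\, x_0,\, \tau_{> m}(p))$ delivers the degree-$k$ morphism $(y_0 \cdot \tau_{> m}(p),\, k,\, p)$ with source $p$.

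For necessity ($\Rightarrow$), assume $\mathfrak{G}_{_\mathcal{G}}$ is strongly $\mathbb{Z}$-graded. For (1), argue by contradiction: if some ultraset $A \in \mathcal{G}^0$ were an infinite emitter, then $A$ (as a length-zero ultrapath) would lie in $Y_\infty \subseteq X_{\mathcal{G}}$, hence be a unit. A degree-$(-1)$ morphism with source $A$ would take the form $(x \cdot \mu, -1, y \cdot \mu)$ with $y \cdot \mu = A$. A case analysis of the ultrapath product shows $y \cdot \mu$ has length zero only when $|y| = |\mu| = 0$, whence $|y| = 0$ and $|x| = -1$, which is impossible. For (2), fix $k \in \mathbb{N}$ and $p = e_1 e_2 \cdots \in \mathfrak{p}^\infty$ and pick a degree-$k$ morphism $(x' \cdot \mu,\, k,\, y' \cdot \mu) = (\nu, k, p)$. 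Since $y' \cdot \mu = p$ has infinite length, $y' = (\beta, C)$ with $\beta = e_1 \cdots e_n$ an initial subpath of $p$ of length $n = |y'|$ and $\mu = \tau_{> n}(p)$ (the case $n = 0$ being $y' = C$ and $\mu = p$ with $s(p) \in C$), while $x' = (\gamma, C)$ with $|\gamma| = n + k$ and $C \subseteq r(\gamma)$. The crucial step is to extend $\gamma$ by a tail of $p$: for any fixed $m \geq 1$, set $y := \gamma\, e_{n+1} \cdots e_{n+m}$ (a valid path, since $s(e_{n+1}) \in C \subseteq r(\gamma)$) and $x := e_1 \cdots e_{n+m}$. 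Then $x$ is an initial subpath of $p$, $r(y) = r(e_{n+m}) = r(x)$, and $|y| - |x| = k$, verifying~(2).

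The main obstacle will be the range-matching step in the necessity of (2): because ranges of paths in an ultragraph are subsets of $G^0$ rather than single vertices, the required equality $r(x) = r(y)$ cannot be read off directly from the groupoid element produced by strong gradedness, and one must extend the finite path coming from $x'$ and the initial segment coming from $y'$ by a common tail of $p$ so that the ranges agree. The supporting case analysis of the ultrapath product on $\mathfrak{p}$, used in both directions, is routine but demands care with the length-zero cases.
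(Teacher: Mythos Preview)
Your proposal is correct and follows the same strategy as the paper: both reduce strong gradedness to the existence, for each unit and each integer $n$, of a degree-$n$ morphism at that unit (the paper invokes this as a lemma from the literature on strongly graded groupoids, while you sketch the one-line argument directly). Your treatment of the necessity of~(2) is in fact more careful than the paper's: the paper argues by contrapositive and tacitly assumes that any degree-$k$ arrow with source $p$ already comes presented via an initial subpath $x$ of $p$ and a path $y$ with $r(x)=r(y)$, whereas you explicitly extend both by a common tail of $p$ to force the range equality---precisely the subtlety you flag as the main obstacle.
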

\begin{proof}
($\Longrightarrow$) Firstly, assume that $\mathcal{G}$ has an infinite emitter. Then there exists a finite path $x\in X_{\mathcal{G}}$. The element $(x, 0, x)\in \mathfrak{G}_{_\mathcal{G}}^{(0)}$ cannot be factored in the form
$(x, |x|+1, \mu) (\mu, - (|x|+1), x)$, where $\mu\in X_{\mathcal{G}}$, and so $(x, 0, x)\notin (\mathfrak{G}_{_\mathcal{G}})_{|x|+1} (\mathfrak{G}_{_\mathcal{G}})_{-(|x|+1)}$. Therefore, $\mathfrak{G}_{_\mathcal{G}}$ is not strongly $\mathbb{Z}$-graded. Secondly, suppose $\mathcal{G}$ has no infinite emitters, but fails to satisfy item (2). This means there is some $k\in \mathbb{N}$, and some infinite path $p\in \mathfrak{p}^{\infty}$,
such that for every initial subpath $x$ of $p$, there does not exist a finite path $y\in\mathcal{G}^*$ having $r(y) = r(x)$ and $|y| - |x|= k$. Therefore, $(p, 0, p)\in \mathfrak{G}_{_\mathcal{G}}^{(0)}$ does not admit a factoring of the form $(p, 0, p) = (x\cdot p', - k, y\cdot p')(y\cdot p', k, x\cdot p')$. This shows that $(p, 0, p)\notin (\mathfrak{G}_{_\mathcal{G}})_{-k} (\mathfrak{G}_{_\mathcal{G}})_{k}$, so $\mathfrak{G}_{_\mathcal{G}}$ is not strongly $\mathbb{Z}$-graded.

$(\Longleftarrow)$ Suppose $\mathcal{G}$ satisfies items (1) and (2). By \cite[Proposition 25]{mm:gaispouca}, $X_{\mathcal{G}} = \mathfrak{p}^{\infty}$, and so $\mathfrak{G}_{_\mathcal{G}}^{(0)} = \{(p, 0, p)\mid p\in \mathfrak{p}^{\infty}\}$. Let $p\in \mathfrak{p}^{\infty}$ be arbitrary. For $n\geq 0$, we have $(p, n, \tau_{> n}(p)) \in (\mathfrak{G}_{_\mathcal{G}})_n$. For $n< 0$, item (2) implies that there exists an initial subpath $x$ of $p$ and a finite path $y\in \mathcal{G}^*$ such that $r(x) = r(y)$ and $|y|- |x|= -n$. We then have $(p, n, y\cdot \tau_{> |x|}(p))\in (\mathfrak{G}_{_\mathcal{G}})_n$. Therefore, $(p, 0, p)\in r_{\mathfrak{G}_{_\mathcal{G}}}((\mathfrak{G}_{_\mathcal{G}})_n)$ for all $n\in \mathbb{Z}$. By \cite[Lemma 3.1]{chr:sggasgsa}, $\mathfrak{G}_{_\mathcal{G}}$ is strongly $\mathbb{Z}$-graded, thus finishing the proof.
\end{proof}

We are now in position to provide the main result of this section.

\begin{thm}\label{ULPAisSteinAl}
Let $K$ be a field and $\mathcal{G}$ an ultragraph without sinks. Then the map $\pi_{\mathcal{G}}: L_K(\mathcal{G}) \longrightarrow A_K(\mathfrak{G}_{_\mathcal{G}})$, defined by $\pi_{\mathcal{G}}(p_A) = 1_{\mathcal{A}(A,A)}$, $\pi_{\mathcal{G}}(s_e) = 1_{\mathcal{A}((e, r(e)), r(e))}$ and 
$\pi_{\mathcal{G}}(s^*_e) = 1_{\mathcal{A}(r(e), (e, r(e)))}$, for all $A\in\mathcal{G}^0$ and $e\in \mathcal{G}^1$,  extends to a graded isomorphism.
\end{thm}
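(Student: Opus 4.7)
The plan is to construct $\pi_{\mathcal{G}}$ via the universal property of $L_K(\mathcal{G})$, then verify it is graded, surjective, and (by a graded uniqueness argument) injective.

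\textbf{Step 1 (well-definedness).} Using the universal property stated after Definition~\ref{utraLevittpathalg}, it suffices to check that the elements $b_A := 1_{\mathcal{A}(A,A)}$, $c_e := 1_{\mathcal{A}((e,r(e)),r(e))}$, $c_e^* := 1_{\mathcal{A}(r(e),(e,r(e)))}$ of $A_K(\mathfrak{G}_{_\mathcal{G}})$ satisfy the ultragraph relations (1)--(4). Relation (1) is immediate from the identities $\mathcal{A}(A,A)\cap \mathcal{A}(B,B) = \mathcal{A}(A\cap B, A\cap B)$ and $\mathcal{A}(A,A)\cup \mathcal{A}(B,B) = \mathcal{A}(A\cup B, A\cup B)$ (so $1_A+1_B-1_{A\cap B}=1_{A\cup B}$), together with $\mathcal{A}(\varnothing,\varnothing)=\varnothing$. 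Relations (2) and (3) are checked by computing products of bisections, e.g. $\mathcal{A}(s(e),s(e))\cdot \mathcal{A}((e,r(e)),r(e)) = \mathcal{A}((e,r(e)),r(e))$ and $\mathcal{A}(r(e),(e,r(e)))\cdot \mathcal{A}((f,r(f)),r(f)) = \varnothing$ when $e\neq f$ and $=\mathcal{A}(r(e),r(e))$ when $e=f$. For relation (4), let $v\in G^0$ be regular; since $\mathcal{G}$ has no sinks and $v$ is a finite, non-zero emitter, $v \notin X_{\mathcal{G}}$, so every $\mu\in X_{\mathcal{G}}$ with $v\cdot \mu$ defined must begin with a unique edge $e$ with $s(e)=v$. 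This yields the finite disjoint decomposition $\mathcal{A}(v,v) = \bigsqcup_{s(e)=v} \mathcal{A}((e,r(e)),(e,r(e)))$, which translates into relation (4) after noting $1_{\mathcal{A}((e,r(e)),(e,r(e)))} = c_e\ast c_e^*$.

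\textbf{Step 2 (graded).} The bisection $\mathcal{A}(A,A)$ sits in $(\mathfrak{G}_{_\mathcal{G}})_0$, while $\mathcal{A}((e,r(e)),r(e))$ and $\mathcal{A}(r(e),(e,r(e)))$ lie in $(\mathfrak{G}_{_\mathcal{G}})_1$ and $(\mathfrak{G}_{_\mathcal{G}})_{-1}$ respectively, which agrees with the canonical $\mathbb{Z}$-grading of $L_K(\mathcal{G})$ on generators. Hence $\pi_{\mathcal{G}}$ is graded.

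\textbf{Step 3 (surjectivity).} A direct induction on $|x|,|y|$, using the multiplicativity of $\pi_{\mathcal{G}}$ already established, gives $\pi_{\mathcal{G}}(s_\alpha p_{_A} s_\beta^*) = 1_{\mathcal{A}((\alpha,A),(\beta,A))}$ for $\alpha,\beta\in \mathcal{G}^*$ with $A\subseteq r(\alpha)\cap r(\beta)$ (and similarly for the length-zero cases). The basic bisections $\mathcal{A}(x,y,K,Q)$ are obtained from such $\mathcal{A}(x',y')$ by inclusion--exclusion on the finite sets $K,Q$, so every generator $1_{\mathcal{A}(x,y,K,Q)}$ of $A_K(\mathfrak{G}_{_\mathcal{G}})$ lies in the image of $\pi_{\mathcal{G}}$.

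\textbf{Step 4 (injectivity).} This is the main obstacle. I would apply the graded uniqueness theorem for ultragraph Leavitt path algebras (Imanfar--Pourabbas--Larki, \cite{ima:tlpaou}), which asserts that any $\mathbb{Z}$-graded $K$-algebra homomorphism out of $L_K(\mathcal{G})$ whose restriction to $\{p_{_A}\mid A\in \mathcal{G}^0,\, A\neq\varnothing\}$ is injective is itself injective. Since $\mathcal{G}$ has no sinks, for any non-empty $A\in \mathcal{G}^0$ we can pick a vertex $v\in A$ and an infinite path $p\in \mathfrak{p}^{\infty}$ with $s(p)=v$; then $(p,0,p)\in \mathcal{A}(A,A)$, so $\pi_{\mathcal{G}}(p_{_A}) = 1_{\mathcal{A}(A,A)}\neq 0$. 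More generally, to see that a $K$-linear combination $\sum_i k_i p_{_{A_i}}$ is sent to something non-zero when non-zero, one reduces to the case of pairwise disjoint $A_i$ via the relations in (1); then distinct $\mathcal{A}(A_i,A_i)$ are disjoint, so the image is non-zero. The graded uniqueness theorem then forces $\pi_{\mathcal{G}}$ to be injective, completing the proof.
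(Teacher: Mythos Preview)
Your proof is correct, and Steps~1--3 closely parallel the paper's argument (you spell out the inclusion--exclusion needed for surjectivity, which the paper leaves implicit). The substantive difference is in Step~4: you invoke the Graded Uniqueness Theorem from \cite{ima:tlpaou}, whereas the paper instead appeals to the Reduction Theorem of Gon\c{c}alves--Royer \cite[Theorem~3.2]{gon:ratrt19}, which says that any nonzero element of $L_K(\mathcal{G})$ can be compressed by left and right multiplication to either some $p_A$ or to a Laurent polynomial in a cycle without exits. The paper then handles each case directly, using in the cycle case that $\ker(\pi_{\mathcal{G}})$ is graded to strip off a single homogeneous term and reduce to the $p_A$ case. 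Your route is cleaner once the Graded Uniqueness Theorem is available; the paper's route effectively rederives, inline, the special instance of graded uniqueness needed here, at the cost of importing the Reduction Theorem.

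One small remark on your Step~4: the hypothesis of the Graded Uniqueness Theorem in \cite{ima:tlpaou} is only that $\pi_{\mathcal{G}}(p_A)\neq 0$ for each nonempty $A\in\mathcal{G}^0$, which you verify correctly via the no-sinks assumption. Your additional sketch for nonzero $K$-linear combinations is therefore unnecessary, and as written the ``reduce to pairwise disjoint $A_i$'' step is not straightforwardly available, since $\mathcal{G}^0$ is closed under finite unions and intersections but not, in general, under relative complements. Fortunately this extra step is not needed for the theorem to apply.
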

\begin{proof}
We define the elements $\{q_A\mid A\in \mathcal{G}^0\}$ and $\{t_e, t^*_e\mid e\in \mathcal{G}^1\}$ of $A_K(\mathfrak{G}_{_\mathcal{G}})$ by setting:
\begin{center}
$q_A := 1_{\mathcal{A}(A,A)}$, $t_e := 1_{\mathcal{A}((e, r(e)), r(e))}$ and $t^*_e := 1_{\mathcal{A}(r(e), (e, r(e)))}$.	
\end{center}
By repeating verbatim the corresponding argument in the proof of \cite[Proposition 20]{mm:gaispouca}, we obtain that $\{q_A, t_e, t^*_e\mid A\in \mathcal{G}^0, e\in \mathcal{G}^1\}$ satisfies the relations analogous to (1) - (4) in Definition~\ref{utraLevittpathalg}. Then, by the universal
property of $L_K(\mathcal{G})$, there exists a $K$-algebra homomorphism $\pi_{\mathcal{G}}: L_K(\mathcal{G}) \longrightarrow A_K(\mathfrak{G}_{_\mathcal{G}})$, which maps $p_A\longmapsto q_A$, $s_e\longmapsto t_e$ and $s^*_e\longmapsto t^*_e$. Since $q_A$ has degree $0$, $t_e$ has degree $1$ and $t^*_e$ has degree $-1$ for all $A\in \mathcal{G}^0$ and $e\in \mathcal{G}^1$, $\pi_{\mathcal{G}}$ is thus a $\mathbb{Z}$-graded homomorphism. Note that we always have
\begin{itemize} 
\item[(1)] $\pi_{\mathcal{G}}(s_{\alpha}p_As^*_{\beta}) = 1_{\mathcal{A}(x, y)}$, where $x = (\alpha, r(\alpha)\cap r(\beta) \cap A)$ and $y = (\beta, r(\alpha)\cap r(\beta) \cap A)$, for all $A\in\mathcal{G}^0$ and $\alpha, \beta\in \mathcal{G}^*\setminus \mathcal{G}^0$;

\item[(2)] $\pi_{\mathcal{G}}(s_{\alpha}p_A) = 1_{\mathcal{A}(x, r(x))}$, where $x = (\alpha, r(\alpha)\cap A)$, for all $A\in\mathcal{G}^0$ and $\alpha\in \mathcal{G}^*\setminus \mathcal{G}^0$.
\end{itemize}
This implies that $\pi_{\mathcal{G}}$ is surjective. We next show that $\pi_{\mathcal{G}}$ is injective. Indeed, assume that $\pi_{\mathcal{G}}$ is not injective, that means, there exists a nonzero element $x\in \ker(\pi_{\mathcal{G}})$. By the Reduction Theorem \cite[Theorem 3.2]{gon:ratrt19}, there exist elements $a, b\in L_K(\mathcal{G})$ such that either $axb = p_A \neq 0$ for some $A\in \mathcal{G}^0$, or $axb = \sum_{i=1}^nk_is_c^i \neq 0$, where $c$ is a cycle in $\mathcal{G}$ without exits.

In the first case, since $axb\in \ker{\pi_{\mathcal{G}}}$, this would imply that $q_A = \pi_{\mathcal{G}}(p_A) = 0$. On the other hand, since $\mathcal{G}$ has no sinks, $\mathcal{A}(A, A)\neq \varnothing$, so $q_A = 1_{\mathcal{A}(A,A)} \neq 0$, a contradiction.

So we are in the second case: there exists a cycle $c$ in $\mathcal{G}$ without exits such that  $axb = \sum_{i=1}^nk_is_c^i \neq 0$, where $k_i\in K$. We then have $\sum_{i=1}^nk_is_c^i \in \ker(\pi_{\mathcal{G}})$. Since $\pi_{\mathcal{G}}$ is a $\mathbb{Z}$-graded homomorphism, $\ker(\pi_{\mathcal{G}})$ is a graded ideal of $L_K(\mathcal{G})$, and so $k_is_c^i\in \ker(\pi_{\mathcal{G}})$ for all $i$. Let $j \in \{1, \hdots , n\}$ with $k_j\neq 0$. We have $p_{r(c)} = k^{-1}_j (s_c^j)^* \cdot k_j s_c^j\in \ker(\pi_{\mathcal{G}})$, so $q_{r(c)} = \pi_{\mathcal{G}}(p_{r(e)}) = 0$. Then, we produce a contradiction by repeating the argument described in the first case.

In any case, we receive a contradiction, so $\pi_{\mathcal{G}}$ is injective. Thus, $\pi_{\mathcal{G}}$ is a graded isomorphism, finishing the proof.
\end{proof}

In \cite[Theorem 2.6]{ima:tlpaou}, by constructing a representation for $L_K(\mathcal{G})$, is was shown that the elements $p_A$, $A \in \mathcal G^{0}$,  are not zero. By constructing the convolution algebra model for these algebras in Theorem~\ref{ULPAisSteinAl}, it is immediate that all monomials of the form $s_{\alpha}p_As^*_{\beta}$, where $r(\alpha)\cap r(\beta) \cap A\not = \varnothing$,  are not zero.

Realizing ultragraph Leavitt path algebras as groupoid algebras in Theorem~\ref{ULPAisSteinAl} allows us to use the results developed on the setting of Steinberg algebras to derive results on ultragraph Leavitt path algebras.

Firstly, we obtain the following criterion for an ultragraph Leavitt path algebra to be strongly graded.

\begin{cor}\label{strgradedULPA}
For any field $K$ and any ultragraph $\mathcal{G}$ without sinks, the ultragraph Leavitt path algebra $L_K(\mathcal{G})$  is strongly $\mathbb{Z}$-graded if and only if the following conditions hold:

$(1)$ $\mathcal{G}$ has no infinite emitters;

$(2)$ For every $k\in \mathbb{N}$ and every infinite path $p\in \mathfrak{p}^{\infty}$, there exists an initial subpath $x$ of $p$
and a finite path $y\in \mathcal{G}^*$ such that $r(x) = r(y)$ and $|y|- |x|= k$.	
\end{cor}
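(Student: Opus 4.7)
The plan is to reduce the claim to the groupoid-level statement already proved in Proposition~\ref{strgradedgroupoid}, via the graded isomorphism of Theorem~\ref{ULPAisSteinAl}. Concretely, I would argue that since $\pi_{\mathcal{G}}: L_K(\mathcal{G})\longrightarrow A_K(\mathfrak{G}_{_\mathcal{G}})$ is a graded isomorphism, $L_K(\mathcal{G})$ is strongly $\mathbb{Z}$-graded if and only if $A_K(\mathfrak{G}_{_\mathcal{G}})$ is strongly $\mathbb{Z}$-graded. Then I would invoke (or verify) that for an ample Hausdorff groupoid $\mathcal{H}$, strong $\Gamma$-grading of $A_K(\mathcal{H})$ is equivalent to strong $\Gamma$-grading of $\mathcal{H}$, which reduces the question to Proposition~\ref{strgradedgroupoid} applied to $\mathfrak{G}_{_\mathcal{G}}$.

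To justify the equivalence between strong grading of $A_K(\mathcal{H})$ and of $\mathcal{H}$, I would proceed as follows. One direction is immediate: if $\mathcal{H}=\bigsqcup \mathcal{H}_\gamma$ is strongly graded, then any $\gamma$-homogeneous compact open bisection $W\subseteq \mathcal{H}_\gamma$ can be covered by finitely many products $U_iV_i$ with $U_i$ a $\gamma\delta^{-1}$-homogeneous and $V_i$ a $\delta$-homogeneous compact open bisection (using that $\mathcal{H}_\gamma=\mathcal{H}_{\gamma\delta^{-1}}\mathcal{H}_\delta$ and local compactness), and then $1_W$ is an appropriate sum in $A_K(\mathcal{H})_{\gamma\delta^{-1}}A_K(\mathcal{H})_\delta$. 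For the converse, if the algebra is strongly graded then $1_W$ lies in $A_K(\mathcal{H})_{\gamma\delta^{-1}}A_K(\mathcal{H})_\delta$, and by examining supports one sees that each point of $W$ lies in a product of bisections of the appropriate degrees, giving $\mathcal{H}_\gamma\subseteq \mathcal{H}_{\gamma\delta^{-1}}\mathcal{H}_\delta$.

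With this equivalence in hand, the corollary follows verbatim from Proposition~\ref{strgradedgroupoid}: conditions (1) and (2) characterize strong $\mathbb{Z}$-grading of the ultragraph groupoid $\mathfrak{G}_{_\mathcal{G}}$, hence of the Steinberg algebra $A_K(\mathfrak{G}_{_\mathcal{G}})$, hence of $L_K(\mathcal{G})$.

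The main obstacle, if any, is the clean statement and reference for the algebra/groupoid strong grading equivalence; if this is not available in a form convenient to cite (for instance in \cite{chr:sggasgsa}), then one has to spell out the argument above carefully, in particular ensuring that the cover of a homogeneous bisection by products of bisections uses only compact open pieces so that the support-based passage between $1_W$ and a sum in $A_K(\mathcal{H})_{\gamma\delta^{-1}}A_K(\mathcal{H})_\delta$ is rigorous. Everything else is a direct application of results already in the excerpt.
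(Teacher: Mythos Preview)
Your approach is essentially identical to the paper's: the paper's proof is the one-line ``It follows from Proposition~\ref{strgradedgroupoid}, Theorem~\ref{ULPAisSteinAl} and \cite[Theorem~3.11]{chr:sggasgsa}.'' Your speculation is on the mark --- the equivalence between strong $\Gamma$-grading of a Hausdorff ample groupoid and of its Steinberg algebra is exactly \cite[Theorem~3.11]{chr:sggasgsa}, so there is no need to reprove it by the bisection-covering argument you sketch.
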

\begin{proof}
It follows from Proposition~\ref{strgradedgroupoid}, Theorem~\ref{ULPAisSteinAl} and \cite[Theorem 3.11]{chr:sggasgsa}.
\end{proof}

It is well-known that for any directed graph $E$, the unit space of the graph groupoid associated to $E$ is compact if and only if $E$ has finitely many vertices. The following fact provides us with a criterion for the unit space of an ultragraph groupoid to be compact.

\begin{cor}\label{uspacecomp}
For any ultragraph $\mathcal{G}$ without sinks, the unit space $\mathfrak{G}_{_\mathcal{G}}^{(0)}$ is compact if and only if $G^0\in \mathcal{G}^0$.
\end{cor}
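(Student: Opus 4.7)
The plan is to prove the two directions separately, working directly with the description of $\mathfrak{G}_{_\mathcal{G}}^{(0)}$ as $X_{\mathcal{G}}$ topologized by the basic compact open sets $\mathcal{A}(x,x,K,Q)$ rather than invoking Theorem~\ref{ULPAisSteinAl}.

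For sufficiency, suppose $G^0\in \mathcal{G}^0$. Then $\mathcal{A}(G^0, G^0)$ (with $K=Q=\varnothing$) is a basic compact open subset of $\mathfrak{G}_{_\mathcal{G}}^{(0)}$. I would verify that this set actually equals the whole unit space: every $\mu\in X_{\mathcal{G}}$ has $s(\mu)\in G^0$, so the product $G^0\cdot \mu$ is defined and equals $\mu$, whence $\mu\in \mathcal{A}(G^0,G^0)$. Thus $\mathfrak{G}_{_\mathcal{G}}^{(0)}=\mathcal{A}(G^0,G^0)$ is compact.

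For necessity, suppose $\mathfrak{G}_{_\mathcal{G}}^{(0)}$ is compact. The key observation is that the family
\[
\{\mathcal{A}(\{v\},\{v\})\mid v\in G^0\}
\]
is an open cover of the unit space: indeed every $\mu\in X_{\mathcal{G}}$ lies in $\mathcal{A}(\{s(\mu)\},\{s(\mu)\})$ since $\{s(\mu)\}\cdot\mu=\mu$. By compactness, there exist finitely many vertices $v_1,\ldots,v_n\in G^0$ such that $X_{\mathcal{G}}=\bigcup_{i=1}^n \mathcal{A}(\{v_i\},\{v_i\})$. Since $\mathcal{G}$ has no sinks, for every $v\in G^0$ one can construct an infinite path $\mu_v\in\mathfrak{p}^{\infty}\subseteq X_{\mathcal{G}}$ with $s(\mu_v)=v$; applying the finite subcover to $\mu_v$ forces $v=v_i$ for some $i$. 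Hence $G^0=\{v_1,\ldots,v_n\}$, and since $\mathcal{G}^0$ contains every singleton and is closed under finite unions, $G^0=\bigcup_{i=1}^n\{v_i\}\in \mathcal{G}^0$.

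I do not anticipate a serious obstacle; the only point requiring care is confirming that $\mathcal{A}(G^0,G^0)$ genuinely exhausts $X_{\mathcal{G}}$ in the forward direction (which boils down to the definition of the partial product $x\cdot\mu$), and the use of the no-sinks hypothesis to manufacture infinite paths through each vertex in the reverse direction. Both are immediate from the setup recalled before the statement.
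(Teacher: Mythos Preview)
Your approach is genuinely different from the paper's: you argue directly with the topology of $X_{\mathcal G}$, whereas the paper routes the statement through Theorem~\ref{ULPAisSteinAl}, Steinberg's characterization of unital Steinberg algebras via compactness of the unit space, and the unitality criterion for $L_K(\mathcal G)$ from \cite[Lemma~2.12]{ima:tlpaou}. Your forward direction is fine: once $G^0\in\mathcal G^0$, the basic set $\mathcal A(G^0,G^0)$ is compact open and one checks, exactly as you indicate, that it exhausts the unit space.

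The backward direction, however, proves more than the statement asks for: your finite-subcover argument forces $G^0$ to be \emph{finite}, not merely $G^0\in\mathcal G^0$. These are not equivalent. Take $G^0=\{v_1,v_2,\dots\}$ with, for each $i$, a single edge $e_i$ satisfying $s(e_i)=v_i$ and $r(e_i)=G^0$. This ultragraph has no sinks, $G^0=r(e_1)\in\mathcal G^0$, and every vertex is a regular (non-infinite) emitter, so $Y_\infty=\varnothing$ and $X_{\mathcal G}=\mathfrak p^{\infty}$. By the forward direction (and by the paper's proof via unitality of $L_K(\mathcal G)$) the unit space is compact; yet your cover $\{\mathcal A(\{v_i\},\{v_i\})\}_{i\ge 1}$ consists of pairwise disjoint nonempty open sets and admits no finite subcover. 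So your argument would yield the contradiction that $G^0$ is finite.

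The source of the problem is not your logic but the set-theoretic description of $\mathfrak G_{\mathcal G}^{(0)}$ you are relying on. In Marrero--Muhly's construction the unit space is built from ultrafilters (tight filters) on $\mathcal G^0$, and in examples like the one above it contains boundary points that are not of the form $(\mu,0,\mu)$ for $\mu\in Y_\infty\cup\mathfrak p^\infty$; such points are precisely the ones your vertex-cylinders $\mathcal A(\{v\},\{v\})$ fail to cover. The paper's proof sidesteps this subtlety entirely by working on the algebraic side, where the equivalence ``$L_K(\mathcal G)$ unital $\Leftrightarrow G^0\in\mathcal G^0$'' holds without any appeal to the explicit description of the unit space.
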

\begin{proof} Let $K$ be an arbitrary field. By \cite[Theorem 4.11]{stein:agatdisa}, the unit space $\mathfrak{G}_{_\mathcal{G}}^{(0)}$ is compact if and only if the Steinberg algebra $A_K(\mathfrak{G}_{_\mathcal{G}})$ is unital. We note that $A_K(\mathfrak{G}_{_\mathcal{G}})\cong L_K(\mathcal{G})$ by Theorem~\ref{ULPAisSteinAl}. Moreover, by \cite[Lemma 2.12]{ima:tlpaou}, $L_K(\mathcal{G})$ is unital if and only if $G^0\in \mathcal{G}^0$. From these observations, we immediately obtain the statement of the theorem. 
\end{proof}

\section{Applications}
In this section, we use Theorem~\ref{ULPAisSteinAl} to investigate semiprimitivity (Theorem~\ref{Semiprimitivity}), simplicity (Theorem~\ref{Simplicity3}) and irreducible representations of ultragraph Leavitt path algebras (Theorem~\ref{Irrrep4}). Using groupoid skew product, we show that  ultragraph Leavitt path algebras are graded von Neumann regular rings (Theorems~\ref{grregular1} and \ref{grregular2}).
Also, we show that ultragraph Leavitt path algebras can be realized as Cuntz-Pimsner rings (Theorem~\ref{ULPAasCP-ring2}).

\subsection{Semiprimitivity} Recall that a ring is \textit{semiprimitive} if it has a faithful semisimple module (cf. \cite{lam:afcinr}). We investigate semiprimitivity of ultragraph Leavitt path algebras. Before doing so, we need to recall some notions. Let $\mathcal{G}$ be a groupoid and $u, v\in\mathcal{G}^{(0)}$. Define an equivalence relation on $\mathcal{G}^{(0)}$ by setting $u\sim v$ if there is an arrow
$g\in \mathcal{G}$ such that $s(g) = u$ and $r(g) = v$. An equivalence class will be called an \textit{orbit}. The group $\mathcal{G}_u = \{\gamma \in \mathcal{G}\mid u= r(\gamma) = s(\gamma)\}$ is called the \textit{isotropy group} of $\mathcal{G}$ at $u$. It is easy to verify that up to conjugation
in $\mathcal{G}$ (and hence isomorphism) the isotropy group of $u$ depends only on the orbit of $u$. The \textit{isotropy} of $\mathcal{G}$ is $\text{Iso}(\mathcal{G}): = \cup_{u\in \mathcal{G}^{(0)}}\mathcal{G}_u$.

The following lemma provides us with a complete description of isotropy groups in ultragraph groupoids.

\begin{lem}\label{Isotropygroup}
Let $\mathcal{G}$ be an ultragraph without sinks and $\mu\in X_{\mathcal{G}}$. Then the isotropy group $(\mathfrak{G}_{_\mathcal{G}})_x$ of $x:=(\mu, 0, \mu)$ is trivial unless $\mu = p \sigma\sigma\cdots$	where $p$ is a finite path in $\mathcal{G}^*$ and $\sigma$ is a cycle, in which case 
$(\mathfrak{G}_{_\mathcal{G}})_x\cong \mathbb{Z}$.
\end{lem}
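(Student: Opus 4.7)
The plan is to exploit the canonical $\mathbb{Z}$-grading on $\mathfrak{G}_{_\mathcal{G}}$ given by the functor $\kappa(x\cdot\mu, n, y\cdot\mu)=n$. First I would observe that the restriction of $\kappa$ to the isotropy group, $\phi:(\mathfrak{G}_{_\mathcal{G}})_x\longrightarrow \mathbb{Z}$, $(\mu,n,\mu)\mapsto n$, is a group homomorphism (from the composition rule on triples) that is injective, since both the source and the range of every element of $(\mathfrak{G}_{_\mathcal{G}})_x$ are fixed to be $\mu$. Thus the isotropy embeds in $\mathbb{Z}$ and is either trivial or cyclic infinite; the task is then to decide which case occurs.

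Next I would split on whether $\mu\in Y_{\infty}$ is a finite ultrapath or $\mu\in \mathfrak{p}^{\infty}$ is an infinite path. If $\mu\in Y_{\infty}$, I would do a length count: writing $\mu=x\cdot\mu'=y\cdot\mu'$ with $|x|-|y|=n$, the fact that $\mu$ has finite length forces $\mu'$ to be finite too, and the definition of $\cdot$ gives $|x|+|\mu'|=|\mu|=|y|+|\mu'|$, so $n=0$. Hence $(\mathfrak{G}_{_\mathcal{G}})_x$ is trivial, matching the statement since no such $\mu$ is of the form $p\sigma\sigma\cdots$.

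The substantive case is $\mu\in \mathfrak{p}^{\infty}$. A nontrivial element of $(\mathfrak{G}_{_\mathcal{G}})_x$ has degree $n\neq 0$; after swapping roles I may assume $n>0$. Then $\mu=x\cdot\mu'=y\cdot\mu'$ for some infinite $\mu'$ (forced by $\mu$ being infinite), with $x=(\alpha,A)$, $y=(\beta,B)$, $|\alpha|-|\beta|=n$, and $A=B$. Comparing initial segments of $\mu$ shows $\alpha=\beta\gamma$ for some finite path $\gamma$ of length $n$, and cancelling $\beta$ from the equality $\beta\gamma\mu'=\beta\mu'$ gives $\mu'=\gamma\mu'$. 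Iterating yields $\mu'=\gamma\gamma\cdots$, and since $s(\mu')=s(\gamma)$ appears again at position $|\gamma|+1$ in $\mu'$, one has $s(\gamma)\in r(\gamma)$, so $\gamma$ is a cycle and $\mu=\beta\gamma\gamma\cdots$ has the asserted form.

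For the converse direction (which also shows $(\mathfrak{G}_{_\mathcal{G}})_x\cong \mathbb{Z}$) I would exhibit a nontrivial isotropy element when $\mu=p\sigma\sigma\cdots$: take $x=(p\sigma,\{s(\sigma)\})$, $y=(p,\{s(\sigma)\})$, and $\mu'=\sigma\sigma\cdots\in \mathfrak{p}^{\infty}$. One checks immediately that $r(x)=\{s(\sigma)\}=r(y)$, that $s(\mu')=s(\sigma)$ belongs to this range, and that $x\cdot\mu'=y\cdot\mu'=\mu$; hence $(\mu,|\sigma|,\mu)\in(\mathfrak{G}_{_\mathcal{G}})_x$ is a nontrivial element, so by the injective embedding into $\mathbb{Z}$ we conclude $(\mathfrak{G}_{_\mathcal{G}})_x\cong\mathbb{Z}$. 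The main obstacle I expect is the bookkeeping of the ultraset coordinates $A,B$ in the ultrapath formalism to ensure all the products $x\cdot\mu'$ are defined and equal, but once the definition of composition in $\mathfrak{p}$ is unpacked the argument parallels the familiar graph-groupoid case.
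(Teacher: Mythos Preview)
Your proposal is correct and follows essentially the same approach as the paper: identify isotropy elements as triples $(\mu,k,\mu)$, observe that the degree map into $\mathbb{Z}$ is an injective homomorphism, and then analyze when a nontrivial element can occur by comparing initial segments to produce a periodic tail. Your treatment is in fact slightly more thorough than the paper's, since you explicitly separate the case $\mu\in Y_\infty$ (handled by a length count) and you argue the isomorphism with $\mathbb{Z}$ cleanly via ``nontrivial subgroup of $\mathbb{Z}$'' rather than writing down the image set; the only cosmetic wrinkle is that when $|p|=0$ your pair $y=(p,\{s(\sigma)\})$ should be replaced by $y=\{s(\sigma)\}\in\mathcal{G}^0$, which you already anticipate in your remark about ultraset bookkeeping.
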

\begin{proof}
An isotropy group element is of the form $g = (\mu, k, \mu)$ where $\mu = \alpha\nu = \beta\nu$	with $k = |\alpha| - |\beta|$. Moreover, $g$ is a unit unless $k\neq 0$. If $k\neq 0$, replacing $g$ by $g^{-1}$ we may assume that $|\alpha| > |\beta|$. Then $\alpha = \beta\gamma$ and $\nu = \gamma\nu = \gamma\gamma\cdots$. We thus deduce that $\mu = p\sigma\sigma\cdots$ with $p$ is a finite path and $\sigma$ is a cycle. Moreover,  if $m, n\ge 0$, then $\mu = p\sigma^m\sigma\sigma\cdots = p\sigma^n\sigma\sigma\cdots$ shows that $(\mu, |\sigma|(m-n), \mu)\in (\mathfrak{G}_{_\mathcal{G}})_x$. This implies  $(\mathfrak{G}_{_\mathcal{G}})_x= \{(\mu, |\sigma|(m-n), \mu)\mid m, n\ge 0\}$, and so $(\mathfrak{G}_{_\mathcal{G}})_x\cong \mathbb{Z}$ by the map: $(\mu, |\sigma|(m-n), \mu)\longmapsto m-n$, finishing the proof.
\end{proof}

As a corollary of Theorem~\ref{ULPAisSteinAl}, we have the following.

\begin{thm}\label{Semiprimitivity}
Let $K$ be a field and $\mathcal{G}$ an ultragraph. Then the ultragraph Leavitt path algebra $L_K(\mathcal{G})$	is semiprimitive.
\end{thm}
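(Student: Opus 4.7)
The plan is to combine the realization $L_K(\mathcal{G}) \cong A_K(\mathfrak{G}_{_\mathcal{G}})$ from Theorem~\ref{ULPAisSteinAl} with the semiprimitivity criterion for Steinberg algebras of Hausdorff ample groupoids developed in \cite{stein:spasoegawatisa}, feeding in the isotropy calculation from Lemma~\ref{Isotropygroup}.

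First I would reduce to the case where $\mathcal{G}$ has no sinks, since Theorem~\ref{ULPAisSteinAl} is stated under that hypothesis. The standard move is to attach a tail (an infinite sequence of fresh vertices together with edges) at each sink of $\mathcal{G}$, producing a sink-free ultragraph $\mathcal{G}'$ for which $L_K(\mathcal{G})$ is isomorphic to a full corner $p L_K(\mathcal{G}') p$, with $p = \sum_{A} p_A$ ranging over a suitable family of the original vertex projections. Since corner rings of this type yield Morita equivalent algebras and the Jacobson radical satisfies $\on{Rad}(p L_K(\mathcal{G}') p) = p\,\on{Rad}(L_K(\mathcal{G}'))\,p$, semiprimitivity passes between the two, so it suffices to treat $\mathcal{G}'$.

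Next, assuming $\mathcal{G}$ has no sinks, I apply Theorem~\ref{ULPAisSteinAl} to identify $L_K(\mathcal{G})$ with the Steinberg algebra $A_K(\mathfrak{G}_{_\mathcal{G}})$ of the Hausdorff ample groupoid $\mathfrak{G}_{_\mathcal{G}}$. By Lemma~\ref{Isotropygroup}, every isotropy group $(\mathfrak{G}_{_\mathcal{G}})_x$ is either trivial or isomorphic to $\mathbb{Z}$. Over any field $K$, the group algebra $K[\mathbb{Z}] \cong K[x,x^{-1}]$ of Laurent polynomials has zero Jacobson radical (a classical fact, e.g.\ via Amitsur's theorem), and of course the group algebra of the trivial group is $K$ itself, which is semiprimitive. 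Thus every isotropy group algebra appearing in $\mathfrak{G}_{_\mathcal{G}}$ is semiprimitive, which is precisely the hypothesis of the semiprimitivity theorem for Steinberg algebras proved in \cite{stein:spasoegawatisa}. Applying that theorem yields $\on{Rad}(A_K(\mathfrak{G}_{_\mathcal{G}})) = 0$, and hence $\on{Rad}(L_K(\mathcal{G})) = 0$, as required.

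The main obstacle is matching the precise statement of the Steinberg algebra semiprimitivity theorem from \cite{stein:spasoegawatisa}: the result there is usually formulated in terms of the interior of the isotropy bundle $\on{Iso}(\mathcal{G})^{\circ}$, rather than the full isotropy, so one must verify that the relevant isotropy groups in $\mathfrak{G}_{_\mathcal{G}}$ whose group algebras must be semiprimitive are still captured by Lemma~\ref{Isotropygroup}. This is immediate once one notes that each such isotropy group is a subgroup of some $(\mathfrak{G}_{_\mathcal{G}})_x$ and hence still trivial or cyclic. The desingularization step is routine but tedious to set up rigorously; everything after that is a short application of the two cited results.
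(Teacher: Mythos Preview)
Your argument is essentially the same as the paper's: for the sink-free case you invoke Theorem~\ref{ULPAisSteinAl}, Lemma~\ref{Isotropygroup}, and Steinberg's semiprimitivity criterion \cite[Theorem~4.4]{stein:spasoegawatisa} exactly as the authors do, and your caveat about $\operatorname{Iso}(\mathcal{G})^{\circ}$ is unnecessary since that theorem is stated directly in terms of the isotropy group algebras $K\mathcal{G}_u$. The only real difference is the reduction to the sink-free case: the paper does not desingularize by hand but instead cites \cite[Theorem~10.5]{fir:meogaulpa} for a Morita equivalence between $L_K(\mathcal{G})$ and $L_K(\mathcal{F})$ with $\mathcal{F}$ sink-free, together with \cite[Proposition~3.2]{am:mefrwi} for invariance of semiprimitivity under Morita equivalence. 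Your tail-attachment sketch is the right idea, but the description of $L_K(\mathcal{G})$ as a corner $pL_K(\mathcal{G}')p$ with $p=\sum_A p_A$ is imprecise when $L_K(\mathcal{G})$ is non-unital (which happens unless $G^0\in\mathcal{G}^0$), so you would still need to phrase it as a Morita equivalence of rings with local units rather than a literal corner.
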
	
\begin{proof} 
We first establish the result for ultragraphs $\mathcal{G}$ without sinks.	Let $x\in \mathfrak{G}_{_\mathcal{G}}^{(0)}$ be an arbitrary element.
By Lemma~\ref{Isotropygroup}, the isotropy group $(\mathfrak{G}_{_\mathcal{G}})_x$ is either trivial or isomorphic to $\mathbb{Z}$, and so the group algebra $K(\mathfrak{G}_{_\mathcal{G}})_x$ is either isomorphic to $K$ or isomorphic to $K[t, t^{-1}]$,  the Laurent polynomial ring in one-variable over $K$. This implies that $K(\mathfrak{G}_{_\mathcal{G}})_x$ is semiprimitive. Then, by \cite[Theorem 4.4]{stein:spasoegawatisa}, the Steinberg algebra $A_K(\mathfrak{G}_{_\mathcal{G}})$ is semiprimitive, and so $L_K(\mathcal{G})$	is semiprimitive by Theorem~\ref{ULPAisSteinAl}.

The result for arbitrary ultragraphs $\mathcal{G}$ then follows from the result of the previous paragraph, the Morita equivalence established in \cite[Theorem 10.5]{fir:meogaulpa} between $L_K(\mathcal{G})$ and $L_K(\mathcal{F})$ for an ultragraph $\mathcal{F}$ without sinks, and the preservation of semiprimitivity under Morita equivalence given in \cite[Proposition 3.2]{am:mefrwi}, thus finishing the proof.
\end{proof}	

\subsection{Irreducible representations} In \cite{gr:iaproulpa} Gon\c{c}alves and Royer extended to ultragraph Leavitt path algebras Chen's construction \cite{c:irolpa} of irreducible representations of Leavitt path algebras. We use a groupoid approach to construct irreducible representations of ultragraph Leavitt path algebras.

Let $K$ be a filed and $\mathcal{G}$ a groupoid, and $u\in\mathcal{G}^{(0)}$. Define $L_u := s^{-1}(u)$. The isotropy group $\mathcal{G}_u$ acts on the right of $L_u$. Consider the $K$-vector space $KL_u$ with basis $L_u$. The right action of $\mathcal{G}_u$ on $L_u$ induces a free right $K\mathcal{G}_u$-module structure on $KL_u$ (see \cite[Proposition 7.7]{stein:agatdisa}). Moreover, by \cite[Proposition 7.8]{stein:agatdisa}, $KL_u$ is a left $A_K(\mathcal{G})$-module with the scalar multiplication defined by: \[f\cdot x = \sum_{y\in L_u}f(yx^{-1})y\] for all $f\in A_K(\mathcal{G})$ and $x\in L_u$. It is useful to note (see \cite[Proposition~7.8]{stein:agatdisa}) that
\begin{align*}1_U\cdot x= \begin{cases} yx &\textnormal{if there is a } y\in U \textnormal{ such that } s(y) = r(x),\\  0 &\textnormal{otherwise}.\end{cases}\end{align*}	For a left $K\mathcal{G}_u$-module $V$, we define the corresponding
\textit{induced} left $A_K(\mathcal{G})$-module to be $$\text{Ind}_u(V) = KL_u\otimes_{K\mathcal{G}_u}V.$$ In \cite[Propositions 7.19 and 7.20]{stein:agatdisa} B. Steinberg obtained the following interesting facts.

\begin{thm}\label{Induction}
Let $K$ be a field, $\mathcal{G}$ an ample groupoid, and $u, v\in \mathcal{G}^{(0)}$.	Then the following statements hold:

$(1)$ $(${\cite[Proposition~7.19]{stein:agatdisa}}$)$ If $V$ is a simple left $K\mathcal{G}_u$-module, then $\rm{Ind}$$_u(V)$ is a simple left $A_K(\mathcal{G})$-module. Moreover, if $V$ and $W$ are non-isomorphic left $K\mathcal{G}_u$-modules, then $\rm{Ind}$$_u(V)\ncong \rm{Ind}$$_u(W)$.

$(2)$ $(${\cite[Proposition~7.20]{stein:agatdisa}}$)$ If $u$ and $v$ are elements in distinct orbits, then $\rm{Ind}$$_u(V)$ and $\rm{Ind}$$_v(V)$ are not isomorphic.
\end{thm}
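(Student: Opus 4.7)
The plan is to follow the standard Mackey-style induction machinery for ample groupoid convolution algebras. The first step is to fix a set-theoretic transversal $T \subseteq L_u$ for the free right action of $\mathcal{G}_u$ on $L_u = s^{-1}(u)$, so that every element of $\mathrm{Ind}_u(V) = KL_u \otimes_{K\mathcal{G}_u} V$ admits a unique normal form $\sum_{t \in T} t \otimes v_t$ with finite support. Combined with the explicit action formula $1_U \cdot x = zx$ (when there is a unique $z \in U$ with $s(z) = r(x)$, and $0$ otherwise) recalled just before the theorem, this normal form is the workhorse throughout.

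For simplicity in (1), I would start with a nonzero $\xi = \sum_{t \in F} t \otimes v_t$ supported on a finite $F \subseteq T$, pick a distinguished $t_0 \in F$, and use the Hausdorff plus ample hypotheses to choose a compact open subset $U \subseteq \mathcal{G}^{(0)}$ containing $r(t_0)$ but none of the other $r(t)$; then $1_U \cdot \xi = t_0 \otimes v_{t_0}$. Next I would take a compact open bisection $W$ containing the arrow $t_0^{-1}$, so that $1_W \cdot (t_0 \otimes v_{t_0}) = u \otimes v_{t_0}$. Simplicity of $V$ as a $K\mathcal{G}_u$-module then gives $u \otimes V$ inside $A_K(\mathcal{G}) \cdot \xi$, and a final round of bisection surgery transports $u \otimes V$ to $t \otimes V$ for every $t \in T$, yielding $A_K(\mathcal{G}) \cdot \xi = \mathrm{Ind}_u(V)$. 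For the second half of (1) and for (2), the plan is to introduce a ``stalk at $u$'' functor $M \longmapsto M_u := \{m \in M : 1_N \cdot m = m \text{ for some compact open } N \subseteq \mathcal{G}^{(0)} \text{ with } u \in N\}$, which naturally carries a $K\mathcal{G}_u$-module structure through the isotropy at $u$. A direct check via the normal form shows $(\mathrm{Ind}_u V)_u \cong V$, so any $A_K(\mathcal{G})$-isomorphism between induced modules restricts to a $K\mathcal{G}_u$-isomorphism between the originals, settling the second half of (1). For (2), if $\phi: \mathrm{Ind}_u(V) \to \mathrm{Ind}_v(W)$ were an isomorphism with $u, v$ in distinct orbits, then $\phi(u \otimes v_0)$ is supported on finitely many $y \in L_v$, hence on finitely many points $r(y)$ in the orbit of $v$; separating $u$ from those by a compact open neighborhood $N \subseteq \mathcal{G}^{(0)}$ gives $1_N \cdot (u \otimes v_0) = u \otimes v_0$ while $1_N \cdot \phi(u \otimes v_0) = 0$, contradicting injectivity of $\phi$.

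The main obstacle will be the isolation step in the simplicity argument, where I need a single compact open $U \subseteq \mathcal{G}^{(0)}$ isolating $r(t_0)$ from the finitely many other $r(t)$ while still containing $r(t_0)$; this uses Hausdorffness to separate the finite set of points and the ample base to refine to a compact open subset of $\mathcal{G}^{(0)}$. The rest of the argument is essentially bookkeeping with the action formula, and the orbit dichotomy in (2) becomes transparent once one notices that any element's support meets only finitely many orbit points, so one never needs to separate $u$ from the whole orbit of $v$ (which might not even be closed) — only from a finite subset.
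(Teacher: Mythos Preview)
The paper does not prove this theorem at all: it simply records Steinberg's Propositions~7.19 and~7.20 as quoted results, with the citations embedded in the statement itself. So there is no ``paper's proof'' to compare against; you are supplying an argument where the paper defers entirely to the reference.

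Your sketch for simplicity in (1) and for (2) is sound. In particular, the isolation step works because distinct transversal elements $t\neq t'$ in $T\subseteq L_u$ automatically have $r(t)\neq r(t')$ (two arrows in $L_u$ with the same range differ by an element of $\mathcal G_u$, hence lie in the same orbit), so Hausdorffness plus the ample base lets you separate $r(t_0)$ from the remaining finitely many $r(t)$. The bisection surgery and the transport of $u\otimes V$ back to every $t\otimes V$ are fine, and your argument for (2) correctly exploits that only finitely many orbit points of $v$ appear in the support of $\phi(u\otimes v_0)$.

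There is, however, a genuine gap in the second half of (1). Your stalk functor is misdefined: with
\[
M_u=\{m\in M:\ 1_N\cdot m=m\ \text{for \emph{some} compact open }N\subseteq\mathcal G^{(0)}\text{ with }u\in N\},
\]
every element $\xi=\sum_{t\in F}t\otimes v_t$ of $\mathrm{Ind}_u(V)$ lies in the stalk, since one may take $N$ to be any compact open set containing the finite set $\{u\}\cup\{r(t):t\in F\}$. Hence $(\mathrm{Ind}_u V)_u=\mathrm{Ind}_u(V)$, not $V$, and the recovery step collapses. The fix is to replace ``for some'' by ``for all'': requiring $1_N\cdot m=m$ for every compact open $N\ni u$ forces the support of $\xi$ to consist only of $t$ with $r(t)=u$, i.e.\ $t=u$, giving $(\mathrm{Ind}_u V)_u=u\otimes V\cong V$ as a $K\mathcal G_u$-module (the isotropy action being well-defined by shrinking bisections, exactly as you indicate). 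Equivalently, one can take the quotient by $\{m:1_N m=0\text{ for some }N\ni u\}$. Either corrected version makes your functorial argument go through; as written, it does not.
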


Let $K$ be an arbitrary field and $\mathcal{G}$ an ultragraph without sinks. Let $x \in Y_{\infty}$, \textit{i.e.}, $x$ is an element in $\mathfrak{p}$ such that  $r(x)$ is an ultraset emitting infinitely many edges. We then have 
\begin{center}
$L_{(x, 0, x)} := s_{\mathfrak{G}_{_\mathcal{G}}}^{-1}((x, 0, x)) = \{(y, |y|-|x|, x)\in\mathfrak{G}_{_\mathcal{G}} \mid y \in \mathfrak{p}, \, r(y) = r(x)\}$.	
\end{center} Consider the $K$-vector space $KL_{(x, 0, x)}$ with basis $L_{(x, 0, x)}$. Then $KL_{(x, 0, x)}$ is a left $A_K(\mathfrak{G}_{_\mathcal{G}})$-module with the scalar multiplication satisfying the following:
\begin{align*}1_{\mathcal{A}(A,A)}\cdot (y, |y|-|x|, x)= \begin{cases} (y, |y|-|x|, x) &\textnormal{if } s(y) \in A,\\  0 &\textnormal{otherwise}\end{cases}\end{align*}

\begin{align*}1_{\mathcal{A}((e, r(e)), r(e))}\cdot (y, |y|-|x|, x)= \begin{cases} (ey, |y|-|x| +1, x) &\textnormal{if } s(y) \in r(e),\\  0 &\textnormal{otherwise}\end{cases}\end{align*}

\[1_{\mathcal{A}(r(e), (e, r(e)))}\cdot (r(x), 0, r(x)) = 0\]

\begin{align*}1_{\mathcal{A}(r(e), (e, r(e)))}\cdot (y, |y|-|x|, x)= \begin{cases} (y', |y|-|x|-1, x) &\textnormal{if } y = ey',\\  0 &\textnormal{otherwise}\end{cases}\end{align*}
for all $A\in \mathcal{G}^0$, $e\in \mathcal{G}^1$ and $(y, |y|-|x|, x) \in L_{(x, 0, x)}$. 

We also have that the isotropy group $(\mathfrak{G}_{_\mathcal{G}})_{(x, 0, x)}$ is trivial by Lemma~\ref{Isotropygroup}, so $K(\mathfrak{G}_{_\mathcal{G}})_{(x, 0, x)} \cong K$ and $K$ is only a simple module over $K(\mathfrak{G}_{_\mathcal{G}})_{(x, 0, x)}$. We then have $$\text{Ind}_{(x, 0, x)}(K) = KL_{(x, 0, x)}\otimes_{K(\mathfrak{G}_{_\mathcal{G}})_{(x, 0, x)}}K = KL_{(x, 0, x)}\otimes_KK \cong KL_{(x, 0, x)}$$
as $A_K(\mathfrak{G}_{_\mathcal{G}})$-modules. 
This module and the isomorphism $\pi_{\mathcal{G}}$ defined in Theorem~\ref{ULPAisSteinAl} induce  a simple left $L_K(\mathcal{G})$-module associated to the element $x\in Y_{\infty}$ as follows. Denote by $V_x$ the vector space over $K$ with basis given by all the elements $y$ in $\mathfrak{p}$ with $r(y) = r(x)$. We note that $V_x\cong KL_{(x,0,x)}$, as $K$-vector spaces, by the map: $y\longmapsto (y, |y|- |x|, x)$.
For any $A\in \mathcal{G}^0$, $e\in \mathcal{G}^1$, and $y = (e_1\cdots e_n, r(x)) \in \mathfrak{p}$, define

\begin{align*}p_A\cdot y= \begin{cases} y &\textnormal{if } s(y) \in A,\\  0 &\textnormal{otherwise}\end{cases} \quad
s_e\cdot y= \begin{cases} (ee_1\cdots e_n, r(x)) &\textnormal{if } s(y) \in r(e),\\  0 &\textnormal{otherwise}\end{cases}
\end{align*}

\begin{align*}s^*_e\cdot y= \begin{cases} (e_2\cdots e_n, r(x)) &\textnormal{if } e = e_1,\\  0 &\textnormal{otherwise,}\end{cases} \quad\quad\quad\textnormal{ and }\quad\quad\quad\quad  s^*_e\cdot r(x) = 0.\end{align*}	
Then by the isomorphism $\pi_{\mathcal{G}}$ defined in Theorem~\ref{ULPAisSteinAl}, the $K$-linear extension to all of $V_x$ of this action endows $V_x$ with the structure
of a left  $L_K(\mathcal{G})$-module.

\begin{lem}\label{Irrrep1}
Let $K$ be a field, $\mathcal{G}$ an ultragraph without sinks, and $x, y\in Y_{\infty}$. Then the following holds:

$(1)$ $V_x$ is a simple left  $L_K(\mathcal{G})$-module;

$(2)$ $V_x \cong V_y$ as left $L_K(\mathcal{G})$-modules if and only if $r(x) = r(y)$, which happens
precisely when $V_x = V_y$. Consequently, $V_x = V_{r(x)}$;

$(3)$ $\rm{End}$$_{L_K(\mathcal{G})}(V_x)\cong K$.
\end{lem}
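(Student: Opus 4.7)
The plan is to use the groupoid realization $L_K(\mathcal{G})\cong A_K(\mathfrak{G}_{_\mathcal{G}})$ from Theorem~\ref{ULPAisSteinAl} to identify $V_x$ with the induced module $\mathrm{Ind}_{(x,0,x)}(K)$ via the $K$-linear bijection $y\mapsto (y,|y|-|x|,x)$, which by inspection intertwines the $L_K(\mathcal{G})$- and $A_K(\mathfrak{G}_{_\mathcal{G}})$-actions formula-by-formula. Since $x\in Y_{\infty}$ is a finite ultrapath, it cannot be written as $p\sigma\sigma\cdots$, so Lemma~\ref{Isotropygroup} gives that $(\mathfrak{G}_{_\mathcal{G}})_{(x,0,x)}$ is trivial; hence $K(\mathfrak{G}_{_\mathcal{G}})_{(x,0,x)}\cong K$ and $K$ is the only simple module over it. Part $(1)$ then follows immediately from Theorem~\ref{Induction}(1): $\mathrm{Ind}_{(x,0,x)}(K)$ is simple over $A_K(\mathfrak{G}_{_\mathcal{G}})$, hence $V_x$ is simple over $L_K(\mathcal{G})$.

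For $(2)$, the forward direction is tautological: if $r(x)=r(y)$, the basis of $V_x$ (ultrapaths with range $r(x)$) and every displayed action formula depend only on $r(x)$, so $V_x=V_y$ literally, and in particular $V_x=V_{r(x)}$. For the converse I would invoke Theorem~\ref{Induction}(2): if $V_x\cong V_y$, then $(x,0,x)$ and $(y,0,y)$ must lie in the same orbit of $\mathfrak{G}_{_\mathcal{G}}$, so there exist $u,v\in\mathfrak{p}$ with $r(u)=r(v)$ and $\mu\in X_{\mathcal{G}}$ such that $x=u\cdot\mu$ and $y=v\cdot\mu$. Since $x,y\in Y_{\infty}$ are finite, $\mu$ must also be a finite ultrapath, and a short case analysis on the lengths $|u|,|v|,|\mu|$ using the ultrapath composition rules yields $r(x)=r(y)$ in every case: if $|\mu|\geq 1$, both ranges equal $r(\mu)$, while if $\mu=B\in\mathcal{G}^0$, a sub-case analysis on $|u|,|v|\in\{0,\geq 1\}$ shows both ranges equal $A\cap B$ where $A=r(u)=r(v)$.

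For $(3)$, Schur's lemma makes $\mathrm{End}_{L_K(\mathcal{G})}(V_x)$ a division ring containing $K$ via scalars, and it suffices to show every endomorphism is a scalar. Since $r(x)\neq 0$ generates the simple module $V_x$, any endomorphism $\phi$ is determined by $\phi(r(x))$. The key step is to show that the subspace $N:=\{v\in V_x : s^*_e\cdot v=0 \text{ for all } e\in\mathcal{G}^1\}$ equals $K\cdot r(x)$. Expanding a putative nonzero $v\in N$ in the basis and picking a term $y_{i_0}=(e_1\cdots e_M,r(x))$ of maximal length $M\geq 1$, the basis element $(e_2\cdots e_M,r(x))$ appears in $s^*_{e_1}\cdot v$ with coefficient exactly that of $y_{i_0}$ (since any other basis element producing the same length-$(M-1)$ ultrapath after applying $s^*_{e_1}$ would have to coincide with $y_{i_0}$), forcing that coefficient to vanish, a contradiction. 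As $s^*_e\cdot r(x)=0$ for every $e$, we get $\phi(r(x))\in N=K\cdot r(x)$, say $\phi(r(x))=\lambda r(x)$, and then $\phi(a\cdot r(x))=a\cdot\phi(r(x))=\lambda(a\cdot r(x))$ gives $\phi=\lambda\cdot\mathrm{id}$. The principal obstacle will be the case analysis in $(2)$: enumerating all configurations of $(u,v,\mu)$ that can realize an arrow between $(x,0,x)$ and $(y,0,y)$ in $\mathfrak{G}_{_\mathcal{G}}$ and uniformly extracting $r(x)=r(y)$; parts $(1)$ and $(3)$ reduce cleanly to general Steinberg-algebra principles combined with the triviality of isotropy at finite ultrapaths.
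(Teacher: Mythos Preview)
Your proposal is correct and follows essentially the same approach as the paper: part (1) via triviality of isotropy and Theorem~\ref{Induction}(1), part (2) via Theorem~\ref{Induction}(2) and the orbit structure, and part (3) by showing any endomorphism fixes $r(x)$ up to a scalar.

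The only notable differences are tactical. In (2) you spell out the case analysis on $|\mu|$, $|u|$, $|v|$ to extract $r(x)=r(y)$, whereas the paper simply asserts ``This implies $r(x)=r(y)$''; your worry that this is the ``principal obstacle'' is unfounded, since your argument is short and complete. In (3) the paper applies $s^*_{y_n}$ for a maximal-length term $y_n$ in one stroke to get $0=k_n r(x)$, while you peel off a single edge via $s^*_{e_1}$ and phrase the result as identifying the subspace $N=\{v:s^*_e\cdot v=0\ \forall e\}$ with $K\cdot r(x)$; both arguments are equivalent and equally clean.
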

\begin{proof}
(1) By Theorem~\ref{Induction} (1), $\text{Ind}_{(x,0,x)}(K) \cong KL_{(x,0,x)}$ is a simple left $A_K(\mathfrak{G}_{_\mathcal{G}})$-module, and so $V_x$ is a simple left $L_K(\mathcal{G})$-module by Theorem~\ref{ULPAisSteinAl}.

(2) Assume that  $V_x \cong V_y$ as left $L_K(\mathcal{G})$-modules. We then have $\text{Ind}_{(x,0,x)}(K) \cong \text{Ind}_{(y, 0, y)}(K)$. By Theorem~\ref{Induction} (2), $(x, 0,x)$ and $(y, 0, y)$ are elements in the same orbit, that means, there exists an element $g\in \mathfrak{G}_{_\mathcal{G}}$
such that $(x,0,x) = s(g)$ and $(y,0,y) = r(g)$. This implies $r(x) = r(y)$. Conversely, if $r(x) = r(y)$, then it is obvious that $V_x = V_y$.

(3) Let $f: V_x\longrightarrow V_x$ be a nonzero $L_K(\mathcal{G})$-homomorphism. Since $V_x$ is simple, $V_x = L_K(\mathcal{G}) r(x)$. Write $f(r(x)) = \sum^n_{i=1} k_i y_i\neq 0$, where $k_i\in K\setminus \{0\}$ and $y_i$'s are distinct elements in $\mathfrak{p}$ with $r(y_i) = r(x)$. By renumbering if
necessary, we may assume without loss of generality that $|y_1| \le |y_2|\le \cdots \le |y_n|$. If $|y_n| \geq 1$, then we have $0=  f(s^*_{y_n}\cdot r(x)) = s^*_{y_n}f(r(x)) = \sum^n_{i=1}k_i s^*_{y_n} y_i = k_nr(x) \neq 0$, a contradiction, and so $|y_n| =0$, that is, $y_n = r(x)$. Moreover, since $y_i$'s are distinct elements, we must have $n =1$, and $f(r(x)) = k_1r(x)$. Since $V_x = L_K(\mathcal{G}) r(x)$, $f(z) = k_1z$ for all $z\in V_x$. This shows that $\rm{End}$$_{L_K(\mathcal{G})}(V_x)\cong K$, finishing the proof.
\end{proof}

If $p$ and $q$ are infinite paths in $\mathcal{G}$, then we say that $p$ and $q$ are \textit{equivalent} (written $p \sim q$) in case there exist non-negative integers $m, n$ such that $\tau_{> m}(p) = \tau_{> n}(q)$. Clearly $\sim$ is an equivalence on $\mathfrak{p}^{\infty}$, and we let $[p]$ denote the $\sim$ equivalence class of the infinite path $p$. Let $c$ be a cycle in $\mathcal{G}$. Then the path $ccc\cdots$ is an infinite path in $\mathcal{G}$, which we denote by $c^{\infty}$. An infinite path $p$ is called a \textit{rational path} if $p\sim c^{\infty}$ for some a cycle $c$. An infinite path $p$ is called an \textit{irrational path} if $p$ is not rational. We denote by $\mathfrak{p}^{\infty}_{rat}$ and $\mathfrak{p}^{\infty}_{irr}$ the sets of rational and irrational paths, respectively.

Let $K$ be an arbitrary field and $\mathcal{G}$ an ultragraph without sinks. Let $p$ be an infinite path in $\mathcal{G}$. We then have 
\begin{center}
$L_{(p, 0, p)} := s_{\mathfrak{G}_{_\mathcal{G}}}^{-1}(p, 0, p) = \{(q, k, p)\in\mathfrak{G}_{_\mathcal{G}} \mid q\in [p],\, k \in \mathbb{Z}\}$.	
\end{center}
Consider the $K$-vector space $KL_{(p,0,p)}$ with basis $L_{(p,0,p)}$. Then $KL_{(p,0,p)}$ is a left $A_K(\mathfrak{G}_{_\mathcal{G}})$-module with the  multiplication satisfying the following:
\begin{align*}1_{\mathcal{A}(A,A)}\cdot (q, k, p)= \begin{cases} (q,k, p) &\textnormal{if } s(q) \in A,\\  0 &\textnormal{otherwise}\end{cases}\end{align*}

\begin{align*}1_{\mathcal{A}((e, r(e)), r(e))}\cdot (q,0,p)= \begin{cases} (eq, k +1, p) &\textnormal{if } s(q) \in r(e),\\  0 &\textnormal{otherwise}\end{cases}\end{align*}

\begin{align*}1_{\mathcal{A}(r(e), (e, r(e)))}\cdot (q,k,p)= \begin{cases} (\tau_{> 1}(q), k-1, p) &\textnormal{if } q = e\tau_{> 1}(q),\\  0 &\textnormal{otherwise}\end{cases}\end{align*}
for all $A\in \mathcal{G}^0$, $e\in \mathcal{G}^1$ and $(q,k,p)\in L_{(p, 0, p)}$. 

Suppose $p$ is an irrational path. We have that the isotropy group $(\mathfrak{G}_{_\mathcal{G}})_{(p,0,p)}$ is trivial by Lemma~\ref{Isotropygroup}, so $K(\mathfrak{G}_{_\mathcal{G}})_{(p,0,p)} \cong K$ and $K$ is only a simple module over $K(\mathfrak{G}_{_\mathcal{G}})_{(p,0,p)}$. We then have $$\text{Ind}_{(p,0,p)}(K) = KL_{(p,0,p)}\otimes_{K(\mathfrak{G}_{_\mathcal{G}})_{(p,0,p)}}K = KL_{(p,0,p)}\otimes_KK \cong KL_{(p,0,p)}$$
as $A_K(\mathfrak{G}_{_\mathcal{G}})$-modules. This module and the isomorphism $\pi_{\mathcal{G}}$ defined in Theorem~\ref{ULPAisSteinAl} induce a simple $L_K(\mathcal{G})$-module associated to the irrational path $p$ as follows. Denote by $V_{[p]}$ the vector space over $K$ with basis given by all infinite paths $q$ in $\mathcal{G}$ with $q\sim p$. We note that $KL_{(p,0,p)}\cong V_{[p]}$ as $K$-vector spaces by the map: $(q, k, p)\longmapsto q$. For any $A\in \mathcal{G}^0$, $e\in \mathcal{G}^1$, and $q \in [p]$, define

\begin{align*}p_A\cdot q= \begin{cases} q &\textnormal{if } s(q) \in A,\\  0 &\textnormal{otherwise}\end{cases} 
s_e\cdot q= \begin{cases} eq &\textnormal{if } s(q) \in r(e),\\  0 &\textnormal{otherwise}\end{cases}s^*_e\cdot q= \begin{cases} \tau_{> 1}(q) &\textnormal{if } e = e_1,\\  0 &\textnormal{otherwise.}\end{cases}
\end{align*}	
Then, since $KL_{(p,0,p)}$ is a left $A_K(\mathcal{G})$-module and by the isomorphism $\pi_{\mathcal{G}}$ defined in Theorem~\ref{ULPAisSteinAl}, the $K$-linear extension to all of $V_{[p]}$ of this action endows $V_{[p]}$ with the structure
of a left  $L_K(\mathcal{G})$-module.

\begin{lem}\label{Irrrep2}
Let $K$ be a field, $\mathcal{G}$ an ultragraph without sinks, and $p, q$ irrational paths in $\mathcal{G}$. Then the following holds:
	
$(1)$ $V_{[p]}$ is a simple left  $L_K(\mathcal{G})$-module;
	
$(2)$ $V_{[p]} \cong V_{[q]}$ as left $L_K(\mathcal{G})$-modules if and only if $[p] = [q]$, which happens precisely when $V_{[p]} = V_{[q]}$;
	
$(3)$ $\rm{End}$$_{L_K(\mathcal{G})}(V_{[p]})\cong K$.	
\end{lem}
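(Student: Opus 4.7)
The plan is to mirror the proof of Lemma~\ref{Irrrep1}, with the infinite path $p$ and its equivalence class $[p]$ playing the role of the ultrapath $x$. For part~(1), since $p$ is irrational, Lemma~\ref{Isotropygroup} gives that the isotropy group $(\mathfrak{G}_{_\mathcal{G}})_{(p,0,p)}$ is trivial, so $K(\mathfrak{G}_{_\mathcal{G}})_{(p,0,p)}\cong K$, whose only simple module is $K$ itself. Theorem~\ref{Induction}(1) then produces a simple $A_K(\mathfrak{G}_{_\mathcal{G}})$-module $\operatorname{Ind}_{(p,0,p)}(K)\cong KL_{(p,0,p)}$, and this structure transports along the $K$-linear identification $(q,k,p)\mapsto q$ and the graded isomorphism $\pi_{\mathcal{G}}$ of Theorem~\ref{ULPAisSteinAl} to turn $V_{[p]}$ into a simple $L_K(\mathcal{G})$-module. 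For part~(2), the nontrivial direction comes from Theorem~\ref{Induction}(2) in contrapositive form: $V_{[p]}\cong V_{[q]}$ forces $(p,0,p)$ and $(q,0,q)$ into a common $\mathfrak{G}_{_\mathcal{G}}$-orbit, and unwinding the definition of $\mathfrak{G}_{_\mathcal{G}}$ yields $x,y\in\mathfrak{p}$ and $\mu\in X_{\mathcal{G}}$ with $p=y\cdot\mu$ and $q=x\cdot\mu$, so $p\sim q$ and $[p]=[q]$. The converse is immediate because the two modules share the same basis.

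Part~(3) is the main technical step. I would first observe that $V_{[p]}$ is cyclic: given $q\in[p]$ with $\tau_{>M}(p)=\tau_{>N}(q)$ for some $M,N\ge 0$, the identity $q=s_{\tau_{\le N}(q)}s^*_{\tau_{\le M}(p)}\cdot p$ yields $V_{[p]}=L_K(\mathcal{G})\cdot p$, so every endomorphism $f$ is determined by $f(p)$. Writing $f(p)=\sum_{i=1}^{n}k_iq_i$ with distinct $q_i\in[p]$ and $k_i\in K\setminus\{0\}$, and setting $p=e_1e_2\cdots$, the key computation is $s^*_e\cdot f(p)=f(s^*_e\cdot p)=0$ for every edge $e\neq e_1$. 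The left-hand side equals $\sum_{i:\,q_i\text{ begins with }e}k_i\tau_{>1}(q_i)$, and since distinct paths sharing their first edge have distinct shifts, this sum vanishes only if no $q_i$ begins with $e$; hence every $q_i$ begins with $e_1$. Iterating the same reasoning on $\tau_{>m}(p)$ via the identity $f(\tau_{>m}(p))=\sum_i k_i\tau_{>m}(q_i)$, valid because all $q_i$ agree with $p$ through edge $m$, pins down the $(m+1)$-st edge of each $q_i$ as $e_{m+1}$. Induction on $m$ forces $q_i=p$ for all $i$, so $n=1$ and $f(p)=k_1p$; cyclicity then extends $f$ to scalar multiplication by $k_1$, giving $\operatorname{End}_{L_K(\mathcal{G})}(V_{[p]})\cong K$.

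The main obstacle is making the inductive bookkeeping in part~(3) watertight. At each stage one applies the same $f$ to a deeper tail $\tau_{>m}(p)$ and needs the expansion $f(\tau_{>m}(p))=\sum_i k_i\tau_{>m}(q_i)$ to retain nonzero coefficients with pairwise distinct basis elements, so that the $s^*_e$-vanishing genuinely constrains the initial edges at stage $m+1$. Distinctness of the shifts $\tau_{>m}(q_i)$ is automatic once all $q_i$'s share the initial segment $\tau_{\le m}(p)$ — which is exactly what the previous inductive stage delivers — and the irrationality of $p$ rules out the periodic obstructions that could otherwise allow the constraints to collapse before every $q_i$ is pinned down to $p$.
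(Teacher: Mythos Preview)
Your proposal is correct. Parts~(1) and~(2) match the paper's proof essentially verbatim. For part~(3), your edge-by-edge induction is a valid variant of the paper's more direct argument: rather than iterating $s^*_e$ over single edges, the paper assumes some $q_1\neq p$, picks $m$ large enough that the prefixes $\tau_{\le m}(q_i)$ are pairwise distinct and $\tau_{\le m}(q_1)\neq\tau_{\le m}(p)$, and then applies $s^*_{\tau_{\le m}(q_1)}$ once: this annihilates $p$ but sends $f(p)$ to $k_1\tau_{>m}(q_1)\neq 0$, giving an immediate contradiction. Both arguments rest on the same mechanism---the operators $s^*_\alpha$ separate paths by their initial segments---but the paper's single-step version is shorter and avoids the inductive bookkeeping. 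One small remark: your closing sentence about irrationality ``ruling out periodic obstructions'' in part~(3) is not actually needed; once all $q_i$ share the prefix $\tau_{\le m}(p)$, distinctness of the $q_i$ forces distinctness of the $\tau_{>m}(q_i)$ regardless of whether $p$ is rational, so the induction goes through without that hypothesis (irrationality is used only in part~(1), via the triviality of the isotropy group).
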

\begin{proof}
(1) By Theorem~\ref{Induction} (1), $\text{Ind}_{(p,0,p)}(K) \cong KL_{(p,0,p)}$ is a simple left $A_K(\mathfrak{G}_{_\mathcal{G}})$-module, and so $V_{[p]}$ is a simple left $L_K(\mathcal{G})$-module by Theorem~\ref{ULPAisSteinAl}.
	
(2) Assume that  $V_{[p]} \cong V_{[q]}$ as left $L_K(\mathcal{G})$-modules. We then have $\text{Ind}_{(p,0,p)}(K) \cong \text{Ind}_{(q, 0, q)}(K)$. By Theorem~\ref{Induction} (2), $(p,0,p)$ and $(q, 0, q)$ are elements in the same orbit, that means, there exists an element $g\in \mathfrak{G}_{_\mathcal{G}}$
such that $(p,0,p) = s(g)$ and $(q,0,q) = r(g)$, showing that $p\sim q$, \textit{i.e.}, $[p] = [q]$. Conversely, if $[p] = [q]$, then it is obvious that $V_p = V_q$.
	
(3) Let $f: V_{[p]}\longrightarrow V_{[p]}$ be a nonzero $L_K(\mathcal{G})$-homomorphism. Since $V_{[p]}$ is simple, $V_{[p]} = L_K(\mathcal{G}) p$. Write $f(p) = \sum^n_{i=1} k_i q_i\neq 0$, where $k_i\in K\setminus \{0\}$ and $q_i$'s are distinct infinite paths in $\mathcal{G}$ with $q_i\sim p$. We claim that $n = 1$ and $q_1=p$. Otherwise, we may assume that $q_1\neq p$. Take $m$ large enough such that all the $\tau_{\le m}(q_i)$'s are pairwise distinct and that $\tau_{\le m}(q_1) \neq \tau_{\le m}(p)$. We than have $0= f(0)= f(\tau_{\le m}(q_1)\cdot p) = \tau_{\le m}(q_1)f(p) = \sum^n_{i=1}k_i \tau_{\le m}(q_1) q_i = k_1\tau_{> m}(q_1) \neq 0$, a contradiction, and so  $f(p) = k_1p$. Since $V_{[p]} = L_K(\mathcal{G}) p$, $f(z) = k_1z$ for all $z\in V_{[p]}$. This shows that $\rm{End}$$_{L_K(\mathcal{G})}(V_{[p]})\cong K$, finishing the proof.
\end{proof}

Suppose $p$ is a rational path, that means, $p\sim c^{\infty}$ for some cycle $c$ in $\mathcal{G}$. We then have $V_p = V_{c^{\infty}}$ as left $L_K(\mathcal{G})$-modules, and so we may assume that $p = c^{\infty}$. By Lemma~\ref{Isotropygroup}, the isotropy group $(\mathfrak{G}_{_\mathcal{G}})_{(c^{\infty},0,c^{\infty})}= \{(c^{\infty}, k|c|, c^{\infty})\mid k\in \mathbb{Z}\}\cong \mathbb{Z}$ by the map $(c^{\infty}, k|c|, c^{\infty})\longmapsto k$, and so $K(\mathfrak{G}_{_\mathcal{G}})_{(c^{\infty},0,c^{\infty})} \cong K[t, t^{-1}]$ by the setting: $(c^{\infty}, k|c|, c^{\infty})\longmapsto t^{k}$. We note that every simple $K[t, t^{-1}]$-module is of the form $K[t, t^{-1}]/(f(t))$, where $(f(t))$ is the ideal of $K[t, t^{-1}]$ generated by an irreducible polynomial $f(t)$.

Let $f$ be an irreducible polynomial in $K[t, t^{-1}]$. We then have $$\text{Ind}_{(c^{\infty},0,c^{\infty})}(K[t, t^{-1}]/(f))= KL_{(c^{\infty},0,c^{\infty})}\otimes_{K[t, t^{-1}]}K[t, t^{-1}]/(f)$$ is a left simple $A_K(\mathfrak{G}_{_\mathcal{G}})$-module (see Theorem~\ref{Induction}). This induces a simple left $L_K(\mathcal{G})$-module $V^f_{[c^{\infty}]}$ as follows:

$$V^f_{[c^{\infty}]} = V_{[c^{\infty}]}\otimes_{K[t, t^{-1}]}K[t, t^{-1}]/(f).$$

\begin{lem}\label{Irrrep3}
Let $K$ be a field, $\mathcal{G}$ an ultragraph without sinks, and $p, q$ irrational paths in $\mathcal{G}$. Let $f$ and $g$ be irreducible polynomials in $K[t, t^{-1}]$. Then the following holds:
	
$(1)$ $V^f_{[p]}$ is a simple left  $L_K(\mathcal{G})$-module;
	
$(2)$ $V^f_{[p]} \cong V^f_{[q]}$ as left $L_K(\mathcal{G})$-modules if and only if $[p] = [q]$, which happens precisely when $V^f_{[p]} = V^f_{[q]}$;
	
$(3)$ $V^f_{[p]} \cong V^g_{[p]}$ as left $L_K(\mathcal{G})$-modules if and only if $f = h g$ for some unit $h\in K[t, t^{-1}]$;
	
$(4)$ $\rm{End}$$_{L_K(\mathcal{G})}(V^f_{[p]})\cong K[t, t^{-1}]/(f)$.	
\end{lem}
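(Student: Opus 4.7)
The plan is to leverage Theorem~\ref{Induction} together with the isomorphism $\pi_{\mathcal{G}}$ of Theorem~\ref{ULPAisSteinAl} to settle parts (1)--(3) in close parallel with Lemma~\ref{Irrrep2}, and then to handle part (4) via a Schur-type argument on induced modules. Throughout I identify $V^f_{[p]}$ with $\mathrm{Ind}_{(p,0,p)}(K[t,t^{-1}]/(f))$ through $\pi_{\mathcal{G}}$; for a rational path $p\sim c^{\infty}$, Lemma~\ref{Isotropygroup} yields $(\mathfrak{G}_{_\mathcal{G}})_{(p,0,p)}\cong\mathbb{Z}$, hence $K(\mathfrak{G}_{_\mathcal{G}})_{(p,0,p)}\cong K[t,t^{-1}]$, and irreducibility of $f$ makes $K[t,t^{-1}]/(f)$ a field, in particular a simple $K[t,t^{-1}]$-module.

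For (1), Theorem~\ref{Induction}(1) immediately gives simplicity of $\mathrm{Ind}_{(p,0,p)}(K[t,t^{-1}]/(f))$ over $A_K(\mathfrak{G}_{_\mathcal{G}})$, and pulling back along $\pi_{\mathcal{G}}$ transfers this to $V^f_{[p]}$. For (2), an $L_K(\mathcal{G})$-isomorphism $V^f_{[p]}\cong V^f_{[q]}$ induces, via $\pi_{\mathcal{G}}$, an $A_K(\mathfrak{G}_{_\mathcal{G}})$-isomorphism of the corresponding induced modules; Theorem~\ref{Induction}(2) then forces $(p,0,p)$ and $(q,0,q)$ to share an orbit in $\mathfrak{G}_{_\mathcal{G}}^{(0)}$, which is exactly the relation $[p]=[q]$. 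The reverse implication is built into the construction of $V^f_{[p]}$. For (3), $V^f_{[p]}\cong V^g_{[p]}$ translates to an isomorphism of two modules induced from the same unit, so the second clause of Theorem~\ref{Induction}(1) forces $K[t,t^{-1}]/(f)\cong K[t,t^{-1}]/(g)$ as $K[t,t^{-1}]$-modules; since each has the prescribed annihilator, this is equivalent to $(f)=(g)$, i.e.\ to $f=hg$ for some unit $h\in K[t,t^{-1}]$. The converse is then tautological.

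Part (4) is the main obstacle, since Schur's lemma only guarantees that the endomorphism ring is a division $K$-algebra and does not on its own identify it. My primary route is to invoke the standard induction isomorphism
\[
\mathrm{End}_{A_K(\mathcal{G})}\bigl(\mathrm{Ind}_u(V)\bigr)\;\cong\;\mathrm{End}_{K\mathcal{G}_u}(V)^{\mathrm{op}}
\]
for a simple $K\mathcal{G}_u$-module $V$; in our commutative setting this specialises to $\mathrm{End}_{K[t,t^{-1}]}(K[t,t^{-1}]/(f))\cong K[t,t^{-1}]/(f)$, where the last step uses that a field is a cyclic module over itself with all endomorphisms given by scalar multiplication. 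Should a clean reference for this general identity not be available in the Steinberg-algebra literature already cited, the fallback is a direct hands-on argument in the spirit of Lemma~\ref{Irrrep2}(3): $V^f_{[p]}$ is cyclic, generated by $p\otimes\overline{1}$, so an endomorphism $\phi$ is determined by its value on this generator; writing $\phi(p\otimes\overline{1})=\sum_{i=1}^n k_i\,q_i\otimes\overline{h_i}$ and applying the operators $s^*_{\tau_{\le m}(q_i)}$ for $m$ large enough to separate the $q_i$'s from each other and from $p$, one whittles the support down to the single path $p$, after which the remaining freedom in $\phi$ is precisely multiplication by a class in $K[t,t^{-1}]/(f)$ arising from the $\mathbb{Z}$-action of the isotropy generator $(c^{\infty},|c|,c^{\infty})$, yielding the claimed identification.
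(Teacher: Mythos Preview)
Your proposal is correct and, for parts (1)--(3) and for the fallback in part (4), follows precisely the paper's approach: the paper deduces (1) directly from simplicity of the induced module, handles (2) by the orbit argument via Theorem~\ref{Induction}(2), reduces (3) to the elementary fact that $K[t,t^{-1}]/(f)\cong K[t,t^{-1}]/(g)$ as $K[t,t^{-1}]$-modules iff $f$ and $g$ are unit multiples, and proves (4) by the hands-on computation you describe as your fallback---writing $\phi(p\otimes 1)=\sum q_i\otimes f_i$ and arguing as in Lemma~\ref{Irrrep2}(3) to collapse the sum to $p\otimes f_1$.

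The one genuine difference is your \emph{primary} route for (4), the general identity $\mathrm{End}_{A_K(\mathcal{G})}(\mathrm{Ind}_u(V))\cong\mathrm{End}_{K\mathcal{G}_u}(V)^{\mathrm{op}}$. This is a plausible and elegant statement, but it is not among the results quoted from \cite{stein:agatdisa} (Propositions~7.19--7.20 give only that induction reflects non-isomorphism, not that it is fully faithful on simples), so invoking it would require either locating or supplying an independent proof. The paper sidesteps this entirely by doing the explicit computation, which is exactly your fallback; since you already have that in hand, the proof goes through with no gaps.
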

\begin{proof}
(1) It follows from that $\text{Ind}_{(c^{\infty},0,c^{\infty})}(K[t, t^{-1}]/(f))$ is a left simple $A_K(\mathfrak{G}_{_\mathcal{G}})$-module.

(2) It is done similarly to item (2) of Lemma~\ref{Irrrep2}.

(3) We note that $K[t, t^{-1}]/(f) \cong K[t, t^{-1}]/(g)$ as $K[t, t^{-1}]$-modules if and only if $f = hg$ for some	unit $h\in K[t, t^{-1}]$. Using this note and Theorem~\ref{Induction} (1), we immediately obtain the statement.

(4) Let $\phi: V^f_{[p]}\longrightarrow V^f_{[p]}$ be a non-zero $L_K(\mathcal{G})$-homomorphism. Since $V^f_{[p]}$ is simple, $V^f_{[p]} = L_K(\mathcal{G}) (p\otimes 1)$, where $1$ is the identity of the field $K[t, t^{-1}]/(f)$. Write $\phi(p\otimes 1)= \sum^n_{i=1} q_i\otimes f_i\neq 0$, where $q_i$'s are distinct infinite paths with $q_i\sim p$ and $f_i$'s are elements in $K[t, t^{-1}]/(f)$. Similar to what as done the proof of Lemma~\ref{Irrrep2} (3), we obtain that $\phi(p\otimes 1) = p\otimes f_1 = (p\otimes 1) f_1$, and so $\phi(z) = zf_1$ for all $z\in V^f_{[p]}$, showing that $\rm{End}$$_{L_K(\mathcal{G})}(V^f_{[p]})\cong K[t, t^{-1}]/(f)$, thus finishing the proof.
\end{proof}

We should note that for each rational path $p$, we have $V^{t-1}_{[p]} =  V_{[p]}\otimes_{K[t, t^{-1}]}K[t, t^{-1}]/(t-1)\cong V_{[p]}$ as left $L_K(\mathcal{G})$-modules. Moreover, $V_{[q]}$ ($q$ is an irrational path) and $V^{t-1}_{[p]}$ ($q$ is a rational path) are non-isomorphic, since $(p,0,p)$ and $(q, 0, q)\in (\mathfrak{G}_{_\mathcal{G}})^{(0)}$ are elements in distinct orbits and by Theorem~\ref{Induction} (2). They were also constructed in \cite{gr:iaproulpa}.
Denote by $\text{Irr}(K[t, t^{-1}])$ the set of all irreducible polynomials in $K[t, t^{-1}]$. We define an equivalent relation $\equiv$ on $K[t, t^{-1}]$ as follows. For all $f, g\in K[t, t^{-1}]$, $f \equiv g$ if and only if $f = ug$ for some unit $u\in K[t, t^{-1}]$. We let $[f]$ denote the $\equiv$ equivalent class of $f$.

To summarize, we list all the simple modules over the ultragraph Leavitt path algebra, that are constructed in this subsection.

\begin{thm}\label{Irrrep4}
Let $K$ be a field and $\mathcal{G}$ an ultragraph without sinks. Then the following set \[\{V_{r(x)}\mid x \in Y_{\infty}\} \cup \{V_{[p]}\mid p\in \mathfrak{p}^{\infty}_{irr}\} \cup \{V^f_{[p]}\mid p \in \mathfrak{p}^{\infty}_{rat}, [f]\in \rm{Irr}(K[t, t^{-1}])/\equiv\}\]	consists of pairwise non-isomorphic simple left $L_K(\mathcal{G})$-modules.
\end{thm}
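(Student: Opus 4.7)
The plan is to assemble Theorem~\ref{Irrrep4} from Lemmas~\ref{Irrrep1}, \ref{Irrrep2}, and \ref{Irrrep3} together with the orbit-separation criterion Theorem~\ref{Induction}(2), all transported across the graded isomorphism $\pi_{\mathcal{G}}$ of Theorem~\ref{ULPAisSteinAl}. Simplicity of every listed module, as well as pairwise non-isomorphism \emph{within} each of the three families, is already at hand: Lemma~\ref{Irrrep1} handles $\{V_{r(x)}\}_{x\in Y_\infty}$, Lemma~\ref{Irrrep2} handles $\{V_{[p]}\}_{p\in \mathfrak{p}^\infty_{irr}}$, and Lemma~\ref{Irrrep3} handles $\{V^f_{[p]}\}_{p\in\mathfrak{p}^\infty_{rat},\,[f]\in \operatorname{Irr}(K[t,t^{-1}])/\!\equiv}$. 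So the only remaining task is to rule out isomorphisms between modules drawn from \emph{different} families.

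For this I would identify each listed module with a corresponding induced module over $\mathfrak{G}_{_\mathcal{G}}$: under $\pi_{\mathcal{G}}$, one has $V_{r(x)} \cong \operatorname{Ind}_{(r(x),0,r(x))}(K)$ with $r(x)\in Y_\infty$; $V_{[p]} \cong \operatorname{Ind}_{(p,0,p)}(K)$ with $p\in\mathfrak{p}^\infty_{irr}$; and, choosing a cycle $c$ with $p\sim c^\infty$, $V^f_{[p]} \cong \operatorname{Ind}_{(c^\infty,0,c^\infty)}(K[t,t^{-1}]/(f))$. By Theorem~\ref{Induction}(2) it then suffices to show that the base units corresponding to different families lie in distinct orbits of $\mathfrak{G}_{_\mathcal{G}}^{(0)} = X_\mathcal{G}$.

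The main (and mild) obstacle is this orbit separation. Unwinding the definition of $\mathfrak{G}_{_\mathcal{G}}$, two units $(\mu,0,\mu)$ and $(\nu,0,\nu)$ share an orbit iff there exist $x,y\in\mathfrak{p}$ and $\lambda\in X_\mathcal{G}$ with $r(x)=r(y)$, $\mu=x\cdot\lambda$, and $\nu=y\cdot\lambda$; that is, $\mu$ and $\nu$ share a common tail $\lambda\in X_\mathcal{G}$. Since a finite ultrapath in $Y_\infty$ can only be recovered by appending a finite-ultrapath tail (forcing $\lambda\in Y_\infty$), while an infinite path in $\mathfrak{p}^\infty$ must share an infinite tail ($\lambda\in\mathfrak{p}^\infty$), the $Y_\infty$-orbits and the $\mathfrak{p}^\infty$-orbits are disjoint; this separates family~1 from families~2 and~3. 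For the latter two, the orbits on $\mathfrak{p}^\infty$ are precisely the shift-tail equivalence classes $[p]$, and no irrational infinite path is tail-equivalent to a rational one, so families~2 and~3 are separated as well. Feeding this back through Theorem~\ref{Induction}(2) and Theorem~\ref{ULPAisSteinAl} delivers the cross-family non-isomorphisms and completes the proof.
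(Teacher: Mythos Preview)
Your proposal is correct and follows essentially the same approach as the paper: the paper's proof is the single line ``It follows from Theorem~\ref{Induction} and Lemmas~\ref{Irrrep1}, \ref{Irrrep2} and \ref{Irrrep3},'' and the cross-family orbit separation you spell out is exactly what the paper alludes to in the remark immediately preceding the theorem (where it observes that $(p,0,p)$ and $(q,0,q)$ lie in distinct orbits for $p$ rational and $q$ irrational). You have simply made explicit the orbit analysis that the paper leaves to the reader.
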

\begin{proof}
It follows from Theorem~\ref{Induction} and Lemmas~\ref{Irrrep1}, \ref{Irrrep2} and \ref{Irrrep3}.	
\end{proof}

We should note that the first two module types, $V_{r(x)}$ for $x\in Y_{\infty}$ and $V_{[p]}$ for $p\in \mathfrak{p}^{\infty}_{irr}$, of Theorem~\ref{Irrrep4} are also graded simple modules by \cite[Theorem 7.5]{ahls:gsaatr}.

\subsection{Leavitt path algebra of skew product ultragraphs}\label{skewprodgraph}
In \cite[\S 3]{mm:gaispouca}, in order to show that ultragraph groupoids are amenable, Marrero and Muhly realize the crossed product $C^*(\mathcal G)$ by the gauge action, $C^*(\mathcal G) \rtimes \mathbb T$, as $C^*(\mathcal G \times_1 \mathbb Z)$, which is an AF algebra. Here $\mathcal G \times_1 \mathbb Z$ is the skew product 
ultragraph. In this section, thanks to Theorem~\ref{ULPAisSteinAl},  we are able to give the algebraic version of this result, realizing $L_K(\mathcal G \times_1 \mathbb Z)$ as the smash product of ultragraph Leavitt path algebra $L_K(\mathcal G)$ with the group $\mathbb Z$. This in return allows us to show that $L_K(\mathcal G)$ is a graded regular ring. Several structural results for the algebra  $L_K(\mathcal G)$ follows (see Theorem~\ref{grregular2}).

We recall the notion of skew product of ultragraphs from \cite{mm:gaispouca}. Let $\mathcal G=(G,\mathcal G^1,r,s)$ be an ultragraph.  Denote by $\mathcal G \times_1 \mathbb Z$ the ultragraph as follows: 
\begin{gather*}
    \text{vertices of }\mathcal G \times_1 \mathbb Z  = \big\{v_n \mid v \in G^0 \text{ and } n \in \mathbb Z \big\},\\
     \text{edges of }\mathcal G \times_1 \mathbb Z = \big\{e_n \mid e\in \mathcal G^1 \text{ and } n\in \mathbb Z \big\},\\
    s(e_n) = s(e)_n,\qquad\text{ and } r(e_n) = r(e)_{n-1}.
\end{gather*}

It was shown in \cite{mm:gaispouca} that the groupoid $\mathfrak{G}_{_{\mathcal{G \times \mathbb Z}}}$ associated to the ultragraph $\mathcal G \times_1 \mathbb Z$ is isomorphic to the skew product groupoid $\mathfrak{G}_{_\mathcal{G}}\times \mathbb Z$. We combine this with \cite[Theorem 3.4]{ahls:gsaatr} which realizes the Steinberg algebra of a skew product group as a smash product algebra (see~\S\ref{gradedringsec} for the notion of smash products) to get the following result. We note that our definition of ultragraph $\mathcal G \times_1 \mathbb Z$ slightly differs from the one give in \cite[\S 3]{mm:gaispouca}. Here we decrease the indices ($r(e_n) = r(e)_{n-1}$), whereas in \cite{mm:gaispouca} the indices increase  ($r(e_n) = r(e)_{n+1}$). The reason is the way we defined the skew-product groupoid in \cite{ahls:gsaatr}, which is isomorphic to the one given in \cite{mm:gaispouca}, however was  more obviously compatible with the multiplication in the smash product.

\begin{thm}\label{grregular1}
Let $K$ be a field and $\mathcal{G}$ an ultragraph without sinks. Then there is a graded isomorphism
$L_K(\mathcal G \times_1 \mathbb Z) \cong L_K(\mathcal G) \# \mathbb Z.$
\end{thm}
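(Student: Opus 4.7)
The plan is to obtain the desired isomorphism as a chain of graded isomorphisms by bouncing between the Leavitt path algebra side and the Steinberg algebra side. Concretely, I would prove
\[
L_K(\mathcal G \times_1 \mathbb Z) \;\cong\; A_K(\mathfrak{G}_{_{\mathcal{G}\times_1\mathbb Z}}) \;\cong\; A_K(\mathfrak{G}_{_\mathcal{G}} \times \mathbb Z) \;\cong\; A_K(\mathfrak{G}_{_\mathcal{G}}) \,\#\, \mathbb Z \;\cong\; L_K(\mathcal G) \,\#\, \mathbb Z,
\]
where each of the four isomorphisms is already essentially in hand.

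First, I would verify that the ultragraph $\mathcal G \times_1 \mathbb Z$ is again without sinks, so that Theorem~\ref{ULPAisSteinAl} can be applied to it; this is immediate from the description of edges and sources, since each $v_n\in G^0\times \mathbb Z$ emits $e_n$ whenever $v$ emits $e$ in $\mathcal G$. Applying Theorem~\ref{ULPAisSteinAl} to both $\mathcal G$ and $\mathcal G\times_1\mathbb Z$ then gives the outer two isomorphisms, both as $\mathbb Z$-graded $K$-algebras. For the middle isomorphism, I would invoke the result of Marrero--Muhly stated in the paragraph preceding the theorem, namely that $\mathfrak{G}_{_{\mathcal{G}\times_1\mathbb Z}}\cong \mathfrak{G}_{_\mathcal{G}}\times \mathbb Z$ as $\mathbb Z$-graded topological groupoids (the grading on the right being projection onto $\mathbb Z$, on the left being the canonical functor $\kappa$). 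Functoriality of $A_K(-)$ on graded ample groupoid isomorphisms then yields a graded isomorphism of Steinberg algebras.

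The remaining link is \cite[Theorem 3.4]{ahls:gsaatr}, which realizes the Steinberg algebra of a skew product by a group as a smash product: $A_K(\mathfrak{G}_{_\mathcal{G}}\times \mathbb Z)\cong A_K(\mathfrak{G}_{_\mathcal{G}})\,\#\,\mathbb Z$ as $\mathbb Z$-graded algebras. Combining this with the graded isomorphism $A_K(\mathfrak{G}_{_\mathcal{G}})\cong L_K(\mathcal G)$ of Theorem~\ref{ULPAisSteinAl} induces, via the identity on $\mathbb Z$-labels, a graded isomorphism $A_K(\mathfrak{G}_{_\mathcal{G}})\,\#\,\mathbb Z \cong L_K(\mathcal G)\,\#\,\mathbb Z$. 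Composing the four arrows finishes the proof.

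The only place that requires genuine care is the convention mismatch flagged in the paragraph just before the theorem: the authors' definition of $\mathcal G\times_1\mathbb Z$ uses $r(e_n)=r(e)_{n-1}$, whereas \cite{mm:gaispouca} uses $r(e)_{n+1}$. So the main (mild) obstacle is checking that the skew-product groupoid used in \cite{ahls:gsaatr} matches the present convention so that the smash-product multiplication $(rp_\alpha)(sp_\beta)=rs_{\alpha\beta^{-1}}p_\beta$ is compatible with the grading on $\mathfrak{G}_{_\mathcal{G}}\times\mathbb Z$. Once this sign convention is pinned down, the composition of graded isomorphisms is routine and the theorem follows.
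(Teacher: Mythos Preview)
Your proposal is correct and follows exactly the same four-step chain of graded isomorphisms as the paper's proof, invoking Theorem~\ref{ULPAisSteinAl} at the ends, the Marrero--Muhly groupoid isomorphism in the middle, and \cite[Theorem 3.4]{ahls:gsaatr} for the smash-product step. Your additional care in noting that $\mathcal{G}\times_1\mathbb{Z}$ has no sinks and in flagging the sign-convention issue is well placed, though the paper's proof omits these checks.
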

\begin{proof} We have 
\[L_K(\mathcal G \times_1 \mathbb Z) \cong A_K(\mathfrak{G}_{_{\mathcal{G \times \mathbb Z}}} ) \cong A_K(\mathfrak{G}_{_\mathcal{G}}\times \mathbb Z)\cong 
A_K(\mathfrak{G}_{_\mathcal{G}})\# \mathbb Z\cong L_K(\mathcal G) \# \mathbb Z,\]
where the first and the last isomorphisms come from Theorem~\ref{ULPAisSteinAl}, the second isomorphism induces from $\mathfrak{G}_{_{\mathcal{G \times \mathbb Z}}}  \cong \mathfrak{G}_{_\mathcal{G}}\times \mathbb Z$ and the third isomorphism follows from \cite[Theorem 3.4]{ahls:gsaatr}. 
\end{proof}

For a graded ring $A$, we denote by $J^{\gr}(A)$ the graded Jacobson radical of $A$ and by $J(A)$ the usual Jacobson radical. Recall that a (graded) ring is called a (graded) semi-prime if for any (graded) ideal $I$ in $A$, $I^n\subseteq A$, $n\in \mathbb N$,  implies $I \subseteq A$.

\begin{thm}\label{grregular2}
Let $K$ be a field and $\mathcal{G}$ an ultragraph without sinks. Then $L_K(\mathcal G)$ is graded regular ring. In particular 

\begin{enumerate}[\upshape(1)]

\item Any finitely generated right (left) graded ideal of  $L_K(\mathcal G)$ is generated by one homogeneous
idempotent;

\item Any graded right (left) ideal of $L_K(\mathcal G)$ is idempotent;

\item Any graded ideal of $L_K(\mathcal G)$ is graded semi-prime;

\item   Any graded right (left) $L_K(\mathcal G)$-module is flat;

\item $J(L_K(\mathcal G))=J^{\gr}(L_K(\mathcal G)) =0$.

\end{enumerate}
\end{thm}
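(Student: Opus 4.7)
The plan is to reduce graded regularity of $L_K(\mathcal G)$ to an ultramatricial argument on the Leavitt path algebra of the skew product ultragraph $\mathcal G \times_1 \mathbb Z$, and then to read off (1)--(5) from the standard theory of graded von Neumann regular rings.

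The first step is the substantive one. Because $r(e_n) = r(e)_{n-1}$, every edge of $\mathcal G \times_1 \mathbb Z$ strictly decreases the level index, so $\mathcal G \times_1 \mathbb Z$ is acyclic. Filtering by the subalgebras generated by monomials $s_\alpha p_A s_\alpha^{\ast}$ with $|\alpha| \le n$, one exhibits $L_K(\mathcal G \times_1 \mathbb Z)$ as a directed union of finite direct sums of matrix algebras over $K$, indexed by the resulting finite corners; this is the ultragraph analogue of the Abrams--Rangaswamy description of Leavitt path algebras of acyclic graphs as ultramatricial. Since semisimple rings are von Neumann regular and directed unions of regular subrings are regular, $L_K(\mathcal G \times_1 \mathbb Z)$ is von Neumann regular. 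Being ungraded regular it is \emph{a fortiori} graded regular for any compatible grading, since if $x = xyx$ with $x$ homogeneous of degree $d$ then replacing $y$ by its $(-d)$-component still satisfies $x = xyx$.

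The second step is formal. Theorem~\ref{grregular1} identifies this algebra with $L_K(\mathcal G) \# \mathbb Z$ as $\mathbb Z$-graded rings, and \cite[Lemma 2.3]{ahls:gsaatr} (quoted in \S\ref{gradedringsec}) then yields that $L_K(\mathcal G)$ itself is graded von Neumann regular. Each of the five consequences is now standard from the theory of graded regular rings (see Hazrat's monograph): (1) a relation $x = xax$ for homogeneous $x$ makes $xa$ a homogeneous idempotent generator of the graded right ideal $xL_K(\mathcal G)$; (2) any homogeneous $x$ in a graded ideal $I$ lies in $I^2$ via $x = x(ax)$, and an arbitrary element of $I$ is a sum of such homogeneous ones; (3) follows from (2), since $I^n = 0$ forces $I = I^n = 0$; (4) combines the graded analog of the fact that modules over regular rings are flat with the standard comparison between graded and ungraded flatness over $\mathbb Z$-graded rings; and (5) holds because $J^{\gr}$ vanishes on any graded regular ring, while the ordinary Jacobson radical of a $\mathbb Z$-graded ring is automatically a graded ideal by Bergman's theorem and hence contained in $J^{\gr}$.

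The main obstacle is the ultramatriciality of $L_K(\mathcal G \times_1 \mathbb Z)$ in the first step. For graphs this is classical, but ultragraphs introduce complications coming from the generalized range sets $r(e) \in \mathcal P(G^0)$ and from possible infinite emitters: the finite-level corners must be identified as finite direct sums of matrix algebras indexed by appropriate ultrasets in such a way that the inclusions into the next level respect the Leavitt relations. Once that decomposition is pinned down, the rest is routine bookkeeping via Theorem~\ref{grregular1} and the cited graded-ring results.
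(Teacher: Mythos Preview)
Your proposal is correct and follows essentially the same route as the paper: pass to $L_K(\mathcal G\times_1\mathbb Z)$ via Theorem~\ref{grregular1}, observe the skew product ultragraph is acyclic so its Leavitt path algebra is ultramatricial and hence (graded) regular, and then descend to $L_K(\mathcal G)$ using \cite[Lemma~2.3]{ahls:gsaatr}, with (1)--(5) read off from \cite[\S1.1.9]{hazbook}. The only difference is that the paper does not attempt the ultramatricial filtration you sketch; it simply cites \cite[Theorem~2.7]{nn:pisulpa} or \cite[Lemma~27]{mm:gaispouca} for the fact that Leavitt path algebras of acyclic ultragraphs are ultramatricial, so the ``main obstacle'' you flag is handled by reference rather than argument.
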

\begin{proof}
By \cite[Lemma~2.3]{ahls:gsaatr},  $L_K(\mathcal G)$ is graded regular if and only if  $L_K(\mathcal G) \# \mathbb Z$ is graded regular (see also~\S\ref{gradedringsec}). By Theorem~\ref{grregular1}, 
$L_K(\mathcal G) \# \mathbb Z\cong_{\gr} L_K(\mathcal G \times_1 \mathbb Z)$. The graph $\mathcal G \times_1 \mathbb Z$ is acyclic and thus $L_K(\mathcal G \times_1 \mathbb Z)$ is an ultramatricial algebra (see, e.g., \cite[Theorem 2.7]{nn:pisulpa} or \cite[Lemma 27]{mm:gaispouca}). Since the direct limit of regular rings are regular and matrix rings over fields are regular (\cite[Theorem 1.7]{goodearlbook}), ultramatricial algebras are regular and thus are graded regular. Therefore $L_K(\mathcal G) \# \mathbb Z$ is graded regular and consequently, $L_K(\mathcal G)$ is graded regular.  Now the statements (1) to (5) are the properties of a graded regular ring (see~\cite[\S 1.1.9]{hazbook}).  
\end{proof}

\subsection{Simplicity} In \cite[Theorems 4.7 and 4.8]{gr:saccfulpavpsgrt} Gon\c{c}alves and Royer gave simplicity criteria for ultragraph Leavitt path algebras via the theory of partial skew group
rings. Based on Theorem~\ref{ULPAisSteinAl} and simplicity criteria for Steinberg algebras as described in  literature (see \cite{bcfs:soaateg}, \cite{ce:utfsa} and \cite{stein:spasoegawatisa}), we provide with a groupoid approach the sufficient part of \cite[Theorem 4.8]{gr:saccfulpavpsgrt}. 
Before doing so, we need to recall some notions and useful facts.

Let $\mathcal{G}$ be a Hausdorff ample groupoid. A subset $D$ of $\mathcal{G}^{(0)}$ is called \textit{invariant} if $s(\gamma)\in D$ implies $r(\gamma)\in D$ for all $\gamma\in \mathcal{G}$. Equivalently, $D = \{r(\gamma) \mid s(\gamma)\in D\} = \{s(\gamma) \mid r(\gamma)\in D\}$. Also, $D$ is invariant if and only if its complement is invariant. We say that $\mathcal{G}$ is \textit{minimal} if $\mathcal{G}^{(0)}$ has no nontrivial open invariant subsets. We say that $\mathcal{G}$ is \textit{effective} if the interior of $\text{Iso}(\mathcal{G})\setminus \mathcal{G}^{(0)}$ is empty. We say that $\mathcal{G}$ is \textit{topologically principal} if 
$\{u\in \mathcal{G}^{(0)}\mid \mathcal{G}_u = \{u\}\}$ is dense in $\mathcal{G}^{(0)}$. We note that any Hausdorff ample groupoid being topologically principal is in fact effective,
while the converse holds if the groupoid is second-countable (see \cite[Lemma 3.1]{bcfs:soaateg}).

Let $U$ be a closed invariant subset of $\mathcal{G}^{(0)}$. We write $\mathcal{G}_U:= s^{-1}(U)$, and then $\mathcal{G}_U$ coincides
with the \textit{restriction} \[\mathcal{G}|_{U}:= \{g\in \mathcal{G}\mid s(g), \, r(g)\in U\}\] of $\mathcal{G}$ to $U$. This $\mathcal{G}_U$  is a Hausdorff ample groupoid with  the relative topology, and its unit space is $U$. Following \cite[Defintion 2.1]{cehs:iosaosegwatlpa}, a Hausdorff ample groupoid $\mathcal{G}$ is \textit{strongly effective} if for every nonempty closed invariant subset $U$ of $\mathcal{G}^{(0)}$, the groupoid $\mathcal{G}_U$ is effective. By the above note, if $\mathcal{G}_U$ is topologically principal for all nonempty closed invariant subset $U$ of $\mathcal{G}^{(0)}$, then $\mathcal{G}$ is strongly effective.

The following theorem provides us with a criterion for a Hausdorff ample groupoid to be strongly effective.

\begin{thm}[{\cite[Theorem 3.1]{cehs:iosaosegwatlpa}}]\label{Simplicity1}
	Let $K$ be a field and $\mathcal{G}$ a Hausdorff ample groupoid. Then $\mathcal{G}$ is strongly effective if and only if 
	\begin{center}
		$U\longmapsto I_U:=\{f\in A_K(\mathcal{G})\mid \rm{supp}$$(f)\subseteq \mathcal{G}_U\}$ 
	\end{center} is a lattice isomorphism from the open invariant subsets of $\mathcal{G}^{(0)}$ onto the ideals
	of $ A_K(\mathcal{G})$.	
\end{thm}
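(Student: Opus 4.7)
The plan is to prove the two implications separately, with a reduction theorem for Steinberg algebras of effective ample groupoids as the central technical tool: in such groupoids, every nonzero $f \in A_K(\mathcal G)$ can be multiplied on both sides by characteristic functions of compact open bisections to yield a nonzero scalar multiple of $1_D$ for some compact open $D \subseteq \mathcal G^{(0)}$. First I would verify, for any open invariant $U \subseteq \mathcal G^{(0)}$, that $I_U$ is indeed a two-sided ideal of $A_K(\mathcal G)$: invariance gives $\mathcal G_U = s^{-1}(U) = r^{-1}(U)$, and the support of a convolution $f \ast g$ lies in $\mathrm{supp}(f)\cdot \mathrm{supp}(g)$, which remains inside $\mathcal G_U$ whenever one factor does. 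Lattice preservation, i.e.\ $I_{U_1 \cap U_2} = I_{U_1} \cap I_{U_2}$ and $I_{U_1 \cup U_2} = I_{U_1} + I_{U_2}$, then follows by checking on characteristic functions of compact open bisections, and one needs to certify that the map $U \mapsto I_U$ really is defined on the whole lattice of open invariant subsets.

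Forward direction: Assuming $\mathcal G$ is strongly effective, injectivity of $U \mapsto I_U$ follows from Hausdorffness plus ampleness: if $u \in U_1 \setminus U_2$, I pick a compact open $D \ni u$ with $D \subseteq U_1$ and $D \cap U_2 = \varnothing$, so that $1_D$ separates the two ideals. For surjectivity, given an ideal $I$ of $A_K(\mathcal G)$, set $U := \bigcup \{D \subseteq \mathcal G^{(0)} : D \text{ is compact open and } 1_D \in I\}$. Openness is immediate, and invariance uses that $1_B \ast 1_D \ast 1_{B^{-1}} = 1_{r(B)} \in I$ whenever $s(B) = D$ and $1_D \in I$. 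The nontrivial content is $I \subseteq I_U$: if some $f \in I$ had support meeting $\mathcal G_C$ with $C := \mathcal G^{(0)} \setminus U$, I would restrict $f$ to a function on $\mathcal G_C$, invoke the reduction theorem (valid because $\mathcal G_C$ is effective by strong effectivity), and obtain a compact open $D \subseteq C$ with $1_D$ in the image ideal, lifting back to give $1_{D'} \in I$ for some compact open $D' \subseteq C$ and contradicting the definition of $U$. The reverse inclusion $I_U \subseteq I$ follows by writing generators $1_B$ of $I_U$ as finite sums of $1_{B'}$ with $s(B') \subseteq D$ for some $D$ satisfying $1_D \in I$.

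Backward direction and main obstacle: I would argue the converse by contrapositive. If $\mathcal G$ is not strongly effective, there is a nonempty closed invariant $C \subseteq \mathcal G^{(0)}$ with $\mathcal G_C$ non-effective, so the interior of $\mathrm{Iso}(\mathcal G_C) \setminus C$ contains a nonempty compact open bisection $B$. Lifting $B$ to a compact open bisection $\widetilde B$ of $\mathcal G$, the element $f := 1_{\widetilde B} - 1_{r(\widetilde B)}$ is nonzero in $A_K(\mathcal G)$ but restricts to zero on $\mathcal G_C$. The ideal generated by $f$ together with $I_{\mathcal G^{(0)} \setminus C}$ then sits strictly between $I_{\mathcal G^{(0)} \setminus C}$ and $A_K(\mathcal G)$ yet cannot equal $I_U$ for any open invariant $U$, breaking the claimed lattice isomorphism. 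The hard part will be executing this construction cleanly: one must identify precisely which closed invariant $C$ and which open bisection in its isotropy produce a genuinely new ideal rather than one already captured by an open invariant subset, and this requires careful bookkeeping about how the quotient $A_K(\mathcal G)/I_{\mathcal G^{(0)} \setminus C}$ relates to $A_K(\mathcal G_C)$, so that the failure of effectivity downstairs translates faithfully to the failure of surjectivity of $U \mapsto I_U$ upstairs.
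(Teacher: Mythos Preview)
The paper does not prove this theorem at all: it is quoted verbatim from \cite[Theorem~3.1]{cehs:iosaosegwatlpa} and used as a black box (see the line immediately preceding the statement and the absence of any proof block after it). So there is no ``paper's own proof'' to compare against; your proposal is an attempt to reconstruct the argument of the cited source.

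That said, your sketch tracks the standard proof quite closely. The forward direction is essentially right: the reduction/uniqueness theorem for effective ample groupoids is exactly the tool one uses, and the argument via the quotient $A_K(\mathcal{G})/I_{\mathcal{G}^{(0)}\setminus C}\cong A_K(\mathcal{G}_C)$ is how surjectivity is established in \cite{cehs:iosaosegwatlpa}. One point to tighten: for the inclusion $I_U\subseteq I$ you should argue that any compact open bisection $B$ with $s(B)\subseteq U$ has $s(B)$ covered by finitely many compact opens $D$ with $1_D\in I$, and then $1_B=1_B\ast 1_{s(B)}$ decomposes accordingly.

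Your backward direction has the correct shape but the final step is where you must be careful, and you flag this yourself. The element $f=1_{\widetilde B}-1_{r(\widetilde B)}$ maps to $1_B-1_{r(B)}$ in $A_K(\mathcal{G}_C)$, which is nonzero since $B\cap C=\varnothing$. What you then need is that the ideal of $A_K(\mathcal{G}_C)$ generated by $1_B-1_{r(B)}$ contains no $1_D$ for any nonempty compact open $D\subseteq C$; otherwise the pullback ideal could still equal some $I_V$. This is usually done by exhibiting a representation of $A_K(\mathcal{G}_C)$ (for instance via the regular representation at a unit $u\in s(B)$ with isotropy containing the element of $B$) that annihilates $1_B-1_{r(B)}$ but not $1_D$. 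Once that is in place, the ideal correspondence shows the map $U\mapsto I_U$ is not surjective, completing the contrapositive.
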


The following lemma provides us with a sufficient condition for the ultragraph groupoid of an ultragraph without sinks to be strong effective. Before doing so, we recall some useful notions introduced in \cite{kmst:uavtq}.  Let $\mathcal{G}$ be an ultragraph and $v$ a vertex.  
A \textit{first-return path based at} $v$ in $\mathcal{G}$ is a cycle $c = e_1\cdots e_n$ such that $s(c)= v$ and $s(e_i)\neq v$ for all $i\ge 2$. An ultragraph $\mathcal{G}$ satisfies  \textit{Condition} (K) if every
vertex in $\mathcal{G}$ is either the base of no first-return path or it is the base of at least two first-return paths.

\begin{lem}\label{effective}
	For any ultragraph $\mathcal{G}$ without sinks, the ultragraph groupoid $\mathfrak{G}_{_\mathcal{G}}$ is strongly effective if $\mathcal{G}$ satisfies Condition (K).
\end{lem}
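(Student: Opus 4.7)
My plan is to reduce \emph{strongly effective} to a density statement about units with trivial isotropy, and then to use Condition~(K) to produce such units inside any closed invariant set. Since $G^0$ and $\mathcal{G}^1$ are countable, the basis $\{\mathcal{A}(x,x,K,Q)\}$ of $\mathfrak{G}_{_\mathcal{G}}^{(0)}$ is countable, making $\mathfrak{G}_{_\mathcal{G}}$ second-countable. By \cite[Lemma~3.1]{bcfs:soaateg}, on a Hausdorff ample second-countable groupoid, effectiveness is equivalent to topological principality. Hence it suffices to show, for every nonempty closed invariant subset $U\subseteq \mathfrak{G}_{_\mathcal{G}}^{(0)}$, that the set of $\mu\in U$ whose isotropy group in $(\mathfrak{G}_{_\mathcal{G}})_U$ is trivial is dense in $U$; since $((\mathfrak{G}_{_\mathcal{G}})_U)_\mu\subseteq (\mathfrak{G}_{_\mathcal{G}})_\mu$, it is enough to find, in every nonempty relatively open subset of $U$, a unit whose isotropy in the full groupoid $\mathfrak{G}_{_\mathcal{G}}$ is trivial.

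Lemma~\ref{Isotropygroup} tells us that the units with non-trivial isotropy are exactly the eventually-periodic infinite paths $\mu=p\sigma^\infty$. I fix a basic open set $\mathcal{A}(x,x,K,Q)$ meeting $U$ and pick some $\mu_0$ in the intersection. If $\mu_0$ is not eventually periodic, the argument terminates. Otherwise, after absorbing more iterates of the cycle into the initial segment, I may write $\mu_0=\alpha\sigma^\infty$ with $\alpha$ strictly extending $x$, and with $\sigma$ a cycle based at $v=s(\sigma)$. Since $v$ lies on a cycle, Condition~(K) provides a first-return path $\tau$ at $v$ that is not a prefix of $\sigma$; in particular $v\in r(\tau)$, so every concatenation used below is a legal ultragraph path.

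The core construction chooses a strictly increasing sequence $n_1<n_2<\cdots$ and sets
\[ \mu_k \; := \; \alpha\,\tau\,\sigma^{n_1}\,\tau\,\sigma^{n_2}\cdots\tau\,\sigma^{n_k}\,\sigma^\infty, \qquad \mu^* \; := \; \alpha\,\tau\,\sigma^{n_1}\,\tau\,\sigma^{n_2}\,\tau\,\sigma^{n_3}\cdots. \]
Each $\mu_k$ has eventual tail $\sigma^\infty$, which coincides with the eventual tail of $\mu_0$, so $\mu_k$ lies in the $\mathfrak{G}_{_\mathcal{G}}$-orbit of $\mu_0$ and hence in $U$ by invariance. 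The $\mu_k$ agree with $\mu^*$ on longer and longer initial segments, so $\mu_k\to\mu^*$ in $\mathfrak{G}_{_\mathcal{G}}^{(0)}$, and closedness of $U$ yields $\mu^*\in U$. The unbounded gaps $|\tau|+n_k|\sigma|$ between consecutive occurrences of $\tau$ in $\mu^*$ rule out eventual periodicity, so $\mu^*$ has trivial isotropy; and since $\mu^*$ agrees with $\mu_0$ on $\alpha\supseteq x$, it lies in $\mathcal{A}(x,x,K,Q)\cap U$.

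The main obstacle is producing $\mu^*$ inside $U$: the point $\mu^*$ itself is not in the $\mathfrak{G}_{_\mathcal{G}}$-orbit of $\mu_0$, so invariance alone does not place it in $U$. What saves the argument is closedness of $U$ together with the observation that each finite truncation $\mu_k$ \emph{is} tail-equivalent to $\mu_0$; the density statement then follows purely from Condition~(K), which is the only combinatorial input.
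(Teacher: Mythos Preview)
Your argument is a genuine, self-contained proof, whereas the paper simply invokes \cite[Theorem~31]{mm:gaispouca}, which is precisely the statement that $(\mathfrak{G}_{_\mathcal{G}})_U$ is topologically principal for every nonempty closed invariant $U$ when $\mathcal{G}$ satisfies Condition~(K). Your reduction via second countability and \cite[Lemma~3.1]{bcfs:soaateg} is correct, as is the key idea of approximating an aperiodic $\mu^*$ by orbit-equivalent paths $\mu_k\in U$ and using closedness of $U$.

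There is one soft spot. The sentence ``the unbounded gaps $|\tau|+n_k|\sigma|$ between consecutive occurrences of $\tau$ in $\mu^*$ rule out eventual periodicity'' tacitly assumes that the \emph{marked} copies of $\tau$ are the only occurrences of $\tau$ in $\mu^*$. Nothing prevents $\tau$ from appearing as a factor of some $\sigma^{n_j}$: if $\sigma$ decomposes into first-return paths at $v$ as $\sigma=\rho_1\cdots\rho_m$ and $\tau=\rho_j$ for some $j>1$, your gap count breaks down and the stated reason for aperiodicity no longer applies. The repair is painless: let $\rho$ be the first-return path at $v$ that is the initial first-return segment of $\sigma^\infty$, choose $\tau\neq\rho$ by Condition~(K), and build
\[
\mu^*=\alpha\,\tau\,\rho^{n_1}\,\tau\,\rho^{n_2}\,\tau\cdots,\qquad
\mu_k=\alpha\,\tau\,\rho^{n_1}\cdots\tau\,\rho^{n_k}\,\sigma^\infty.
\]
Each $\mu_k$ still ends in $\sigma^\infty$ and hence lies in the orbit of $\mu_0$, while $\mu_k\to\mu^*$ as before. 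Now the first-return decomposition of $\mu^*$ beyond $\alpha$ is a word over the two-letter alphabet $\{\tau,\rho\}$ with strictly increasing $\rho$-runs; since eventual periodicity of the edge sequence forces eventual periodicity of the first-return-path sequence, this yields the desired contradiction. With this small adjustment your proof is complete and supplies what the paper outsources to \cite{mm:gaispouca}.
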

\begin{proof} Assume that $\mathcal{G}$ satisfies Condition (K). Then, by \cite[Theorem 31]{mm:gaispouca} we have that 	$(\mathfrak{G}_{_\mathcal{G}})_U$ is topologically principal for all nonempty closed invariant subset $U$ of $\mathfrak{G}_{_\mathcal{G}}^{(0)}$, and so $\mathfrak{G}_{_\mathcal{G}}$ is strongly effective, thus finishing the proof.
\end{proof}

Using Theorems~\ref{ULPAisSteinAl} and~\ref{Simplicity1}, and Lemma~\ref{effective}, we immediately obtain the following.

\begin{cor}\label{Simplicity2}
	Let $K$ be a field and $\mathcal{G}$ an ultragraph without sinks. Then the ultragraph Leavitt path algebra $L_K(\mathcal{G})$	is simple if the following holds:
	
	$(1)$ The ultragraph groupoid $\mathfrak{G}_{_\mathcal{G}}$ is minimal;
	
	$(2)$ $\mathcal{G}$ satisfies Condition (K).
\end{cor}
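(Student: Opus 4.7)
The plan is to combine the three tools that the excerpt has already developed: the realization of $L_K(\mathcal{G})$ as a Steinberg algebra (Theorem~\ref{ULPAisSteinAl}), the lattice-isomorphism criterion for strongly effective groupoids (Theorem~\ref{Simplicity1}), and the implication that Condition~(K) forces strong effectiveness (Lemma~\ref{effective}). Minimality will then eliminate all but the trivial open invariant subsets of the unit space, which, transported across the isomorphisms, collapses the ideal lattice of $L_K(\mathcal{G})$ to $\{0, L_K(\mathcal{G})\}$.

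First I would transfer the simplicity question from $L_K(\mathcal{G})$ to $A_K(\mathfrak{G}_{_\mathcal{G}})$ using the graded isomorphism $\pi_{\mathcal{G}}$ of Theorem~\ref{ULPAisSteinAl}; since this is a $K$-algebra isomorphism, the ideal lattices of the two algebras correspond, and it is enough to show $A_K(\mathfrak{G}_{_\mathcal{G}})$ is simple. Next, assumption~(2) together with Lemma~\ref{effective} gives that $\mathfrak{G}_{_\mathcal{G}}$ is strongly effective. Theorem~\ref{Simplicity1} then yields a lattice isomorphism $U \longmapsto I_U$ between the open invariant subsets of $\mathfrak{G}_{_\mathcal{G}}^{(0)}$ and the (two-sided) ideals of $A_K(\mathfrak{G}_{_\mathcal{G}})$.

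Finally I would apply hypothesis~(1): minimality of $\mathfrak{G}_{_\mathcal{G}}$ means the only open invariant subsets of $\mathfrak{G}_{_\mathcal{G}}^{(0)}$ are $\varnothing$ and $\mathfrak{G}_{_\mathcal{G}}^{(0)}$ itself. Under the lattice isomorphism these map respectively to the zero ideal and to the full algebra $A_K(\mathfrak{G}_{_\mathcal{G}})$ (the latter because every characteristic function $1_U$ of a compact open bisection has support inside $\mathfrak{G}_{_\mathcal{G}} = s^{-1}(\mathfrak{G}_{_\mathcal{G}}^{(0)})$). Hence $A_K(\mathfrak{G}_{_\mathcal{G}})$ has only trivial ideals, so it is simple, and pulling back through $\pi_{\mathcal{G}}$ gives simplicity of $L_K(\mathcal{G})$.

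There is no real obstacle here: every substantive step has been packaged in a previously stated result. The only small care needed is the verification that the endpoints of the lattice isomorphism in Theorem~\ref{Simplicity1} genuinely land at $\{0\}$ and at the whole algebra, which is immediate from the definition of $I_U$ as $\{f\in A_K(\mathfrak{G}_{_\mathcal{G}})\mid \operatorname{supp}(f)\subseteq \mathfrak{G}_{_\mathcal{G}}|_U\}$. Thus the corollary follows by a direct chaining of Theorem~\ref{ULPAisSteinAl}, Lemma~\ref{effective}, and Theorem~\ref{Simplicity1}.
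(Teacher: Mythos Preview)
Your proof is correct and follows precisely the same route as the paper, which simply states that the corollary is obtained immediately from Theorem~\ref{ULPAisSteinAl}, Theorem~\ref{Simplicity1}, and Lemma~\ref{effective}. You have just written out the details of that chain.
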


Following \cite{tomf:soua}, if $\mathcal{G}$ is an ultragraph and $v,w\in \mathcal{G}^0$, we write $w\ge v$ to mean that there exits a path $\alpha \in \mathcal{G}^*$ with $s(\alpha)=w$ and $v\in r(\alpha)$. We say that a vertex $v$ \textit{connects to an infinite path} $\alpha = e_1\cdots e_n\cdots$ if there exists an $i\in \mathbb{N}$ such that $v\ge s(e_i)$.

We are now in position to provide the main result of this subsection, being an algebraic version of \cite[Theorem 34]{mm:gaispouca}, giving a groupoid approach to the sufficient part of \cite[Theorem 4.8]{gr:saccfulpavpsgrt}, which gives necessary and sufficient conditions for an ultragraph Leavitt path algebra to be simple.

\begin{thm}\label{Simplicity3}
	Let $K$ be a field and $\mathcal{G}$ an ultragraph without sinks. Then  the ultragraph Leavitt path algebra $L_K(\mathcal{G})$ is simple if the following holds:
	
	$(1)$ $\mathcal{G}$ satisfies Condition (K);
	
	$(2)$ Every vertex connects to every infinite path;
	
	$(3)$ If $A\in \mathcal{G}^0$ is an infinite emitter, then for every $v\in G^0$ there exists a finite path $\alpha\in \mathcal{G}^*$ such that $s(\alpha) = v$ and $A\subseteq r(\alpha)$.
\end{thm}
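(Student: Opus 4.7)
The plan is to invoke Corollary~\ref{Simplicity2}: condition (1) of the theorem is already Condition (K) in the hypothesis of that corollary, so it suffices to establish the minimality of the ultragraph groupoid $\mathfrak{G}_{_\mathcal{G}}$, and I would extract this from conditions (2) and (3).

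To prove minimality, I show that for every $\mu \in X_{\mathcal{G}}$, the orbit of $\mu$ is dense. Fix $\mu$ and a nonempty basic open set $U = \mathcal{A}(z, z, K, Q)$, where $z = (\alpha, A)$, $K$ is a finite set of edges emitted from $A$, and $Q$ is a finite subset of $\mathcal{G}^0$. The goal is to produce an orbit element of $\mu$ inside $U$. Since $U$ is nonempty, I first select a vertex $v \in A \setminus \bigcup_{C \in Q} C$ together with an edge $e \notin K$ with $s(e) = v$; this edge will serve as a buffer handling the avoidance conditions imposed by $K$ and $Q$.

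For the case $\mu = e_1 e_2 \cdots \in \mathfrak{p}^{\infty}$, choose any vertex $w \in r(e)$. Condition (2) supplies a finite path $\delta$ with $s(\delta) = w$ and $s(e_n) \in r(\delta)$ for some $n \geq 1$. Then $\alpha e \delta e_n e_{n+1} \cdots \in \mathfrak{p}^{\infty}$ belongs to the orbit of $\mu$: the factorizations
$\mu = (e_1 \cdots e_{n-1}, \{s(e_n)\}) \cdot (e_n e_{n+1} \cdots)$ and
$\alpha e \delta e_n e_{n+1} \cdots = (\alpha e \delta, \{s(e_n)\}) \cdot (e_n e_{n+1} \cdots)$
have matching ranges, which by definition of $\mathfrak{G}_{_\mathcal{G}}$ places them in the same orbit; by construction the first edge beyond $\alpha$ is $e \notin K$ and its source $v$ avoids $\bigcup_{C\in Q} C$, so the path lies in $U$. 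For the case $\mu = (\beta, B) \in Y_{\infty}$ with $B$ an infinite-emitter ultraset, condition (3) gives a finite path $\delta$ with $s(\delta) = w \in r(e)$ and $B \subseteq r(\delta)$; then $(\alpha e \delta, B) \in Y_{\infty}$ is an orbit element of $\mu$, witnessed by the factorization $\mu = (\beta, B) \cdot B$ versus $(\alpha e \delta, B) = (\alpha e \delta, B) \cdot B$, and it likewise belongs to $U$.

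The main technical obstacle is arranging simultaneous avoidance of the removed sub-cylinders $\mathcal{A}(ze', ze')$ for $e' \in K$ and $\mathcal{A}(zC, zC)$ for $C \in Q$. This is resolved by exploiting the nonemptiness of $U$ at the outset to supply a safe first edge $e$, so that conditions (2) and (3), applied not directly from $v$ but from a vertex in $r(e)$, connect freely to the tail of $\mu$ with no further restriction coming from $K$ or $Q$. Condition (1) is not needed in this minimality argument; it enters only through the application of Corollary~\ref{Simplicity2}.
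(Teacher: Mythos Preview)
Your proof is correct and follows essentially the same route as the paper: reduce simplicity to minimality of $\mathfrak{G}_{_\mathcal{G}}$ via Corollary~\ref{Simplicity2}, and then derive minimality from conditions (2) and (3). The only difference is that the paper defers the minimality argument entirely to \cite[Theorem~3.4]{mm:gaispouca}, whereas you spell it out as an orbit-density computation; your extraction of a ``safe'' vertex $v\in A\setminus\bigcup_{C\in Q}C$ and edge $e\notin K$ from the nonemptiness of $U$ is exactly the right move (note that ultrasets are necessarily singletons, which is what makes the length-zero case go through), and the rest is a clean unwinding of the definitions.
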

\begin{proof}
	Assume that $\mathcal{G}$ satisfies the three conditions (1), (2) and (3). By Corollary~\ref{Simplicity2} we just need to show that the ultragraph groupoid $\mathfrak{G}_{_\mathcal{G}}$ is minimal. So the proof given in \cite[Theorem 3.4]{mm:gaispouca} applies, thus finishing the proof.
\end{proof}

\subsection{Realizing ultragraph Leavitt path algebras as  Cuntz-Pimsner rings} In \cite[Corollary 4.6]{cfhl:zgracpr} the authors gave conditions under which the Steinberg algebra $A_K(\mathcal{G})$ associated to a $\mathbb{Z}$-graded groupoid $\mathcal{G} = \bigsqcup_{n\in \mathbb{Z}}\mathcal{G}_{n}$ can be realised as the Cuntz-Pimsner ring of an $A_K(\mathcal{G})_0$-system. Using this result and Theorem~\ref{ULPAisSteinAl} we show that ultragraph Leavitt path algebras can be realized as Cuntz-Pimsner rings. Before doing so, we need to recall some notions. A $\mathbb{Z}$-graded groupoid $\mathcal{G} = \bigsqcup_{n\in \mathbb{Z}}\mathcal{G}_{n}$ is called \textit{unperforated} if for any $n >0$ and $g\in \mathcal{G}_n$, there exist $g_1, \dots, g_n\in \mathcal{G}_1$ such that $g = g_1\cdots g_n$. 

Corollary 4.6 in \cite{cfhl:zgracpr}  states that if $\mathcal G$ is unperforated, then  there is a graded algebra isomorphism from $A_K(\mathcal{G})$ to the Cuntz--Pimsner ring of the $A_K(\mathcal{G})_0$-system $(A_K(\mathcal{G})_{-1}, A_K(\mathcal{G})_1,\psi)$, where $\psi: A_K(\mathcal{G})_{-1} \otimes A_K(\mathcal{G})_{1}\rightarrow A_K(\mathcal{G})_{0},$ induced by the usual multiplication. 

The following shows that the ultragraph groupoid associated to an ultragraph is always unperforated.

\begin{lem}\label{Unperforated}
For any ultragraph $\mathcal{G}$ without sinks, the ultragraph groupoid $\mathfrak{G}_{_\mathcal{G}}$ is unperforated.
\end{lem}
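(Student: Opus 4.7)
The plan is induction on $n$. Fix $g = (\alpha, n, \beta)\in (\mathfrak{G}_{_\mathcal{G}})_n$ with $n\geq 1$; by definition there is a realization $\alpha = x\cdot\mu$, $\beta = y\cdot\mu$ with $r(x)=r(y)$ and $|x|-|y|=n$. First I would observe that $\alpha$ necessarily contains at least $n$ edges: since $|y|\geq 0$ one has $|x|\geq n$, and in every realization $|\alpha|\geq |x|$ (either $\alpha$ is infinite, or $\alpha\in Y_\infty$ is finite of length $|x|+|\mu|$). In particular the degenerate case where $\alpha$ is a length-zero ultraset in $Y_\infty$ cannot occur, so $\alpha$ has a well-defined first edge $e$.

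Let $\alpha'$ denote the ultrapath obtained by deleting this first edge from $\alpha$: explicitly $\alpha' = \tau_{>1}(\alpha)$ if $\alpha\in\mathfrak{p}^{\infty}$; $\alpha' = (e_2\cdots e_m, A)$ if $\alpha = (e_1\cdots e_m, A)\in Y_\infty$ with $m\geq 2$; and $\alpha' = A$ if $\alpha = (e, A)\in Y_\infty$. In each sub-case $\alpha'\in X_{\mathcal{G}}$, using in the $Y_\infty$ sub-cases that the range $A$ is an infinite-emitting ultraset and this property is preserved after truncation. Then I would set $g_1 := (\alpha, 1, \alpha')$ and check directly that it is a legitimate degree-$1$ element of $\mathfrak{G}_{_\mathcal{G}}$ via the decomposition $x=(e, r(e))$, $y=r(e)\in \mathcal{G}^0$, $\mu=\alpha'$: one verifies $r(x)=r(e)=r(y)$, $x\cdot\mu = \alpha$, $y\cdot\mu = \alpha'$, and $|x|-|y|=1$.

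Once $g_1\in (\mathfrak{G}_{_\mathcal{G}})_1$ is in hand, the element $g' := g_1^{-1}g = (\alpha', n-1, \beta)$ automatically lies in $(\mathfrak{G}_{_\mathcal{G}})_{n-1}$, since $\mathfrak{G}_{_\mathcal{G}}$ is closed under inversion and composition and the grading cocycle $\kappa$ is additive. Applying the induction hypothesis to $g'$ furnishes a factorization $g' = g_2\cdots g_n$ with each $g_i\in (\mathfrak{G}_{_\mathcal{G}})_1$, and left-multiplying by $g_1$ gives the desired $g = g_1 g_2\cdots g_n$. I do not anticipate a genuine obstacle here; the only point deserving mild care is the bookkeeping that each intermediate ultrapath produced by repeated peeling still belongs to $X_{\mathcal{G}}$ and has enough remaining length to sustain the next induction step, both of which follow from the observation that the infinite-emitter condition on the range of a $Y_\infty$-element is stable under truncation, while for infinite paths truncation is obviously harmless.
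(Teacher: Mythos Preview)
Your proof is correct and follows essentially the same idea as the paper: peel off the leading edges of $\alpha$ one at a time to obtain degree-$1$ factors. The paper does this in one explicit pass---writing $x=(e_1\cdots e_{n+k},A)$, $y=(f_1\cdots f_k,A)$, and defining $g_i=((e_i,r(e_i))\cdot\mu_i,1,r(e_i)\cdot\mu_i)$ for $i<n$ and a final $g_n$ that absorbs $y$---whereas you organize the same peeling inductively and use closure of the groupoid under composition to avoid re-verifying that each truncated element lies in $\mathfrak{G}_{_\mathcal{G}}$. The only cosmetic point: make the base case $n=1$ explicit (trivially $g=g$), so that the induction hypothesis is invoked only for $n\geq 2$.
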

\begin{proof}
Let $n>0$ and $g \in (\mathfrak{G}_{_\mathcal{G}})_n$. We claim that there exist $g_1, \cdots, g_n\in (\mathfrak{G}_{_\mathcal{G}})_1$ such that $g = g_1\cdots g_n$. Indeed, since $g \in (\mathfrak{G}_{_\mathcal{G}})_n$, $g$ can be written in the form $g = (x\cdot \mu, n, y\cdot \mu)$, where $x, y\in \mathfrak{p}$ with $r(x) = r(y)$, $|x| - |y| = n$,  $\mu\in X_{\mathcal{G}}$, and $x\cdot \mu, y\cdot \mu\in X_{\mathcal{G}}$. Write $x = (e_1e_2\cdots e_{n+k}, A)$ and $y = (f_1\cdots f_k, B)$, where $A\subseteq r(e_{n+k})$, $B\subseteq r(f_k)$ and $A = B$. For each  $1\le i\le n-1$, let $\mu_i := (e_{i+1}\cdots e_{n+k}, A)\cdot \mu$. We have $\mu_i\in X_{\mathcal{G}}$ for all $i$. We next construct $g_i$'s as follows: for each $1\le i\le n-1$, let $$g_i = ((e_i, r(e_i)\cdot \mu_i, 1, r(e_i)\cdot \mu_i)$$ and $$g_n = ((e_n\cdots e_{n+k}, A)\cdot \mu, 1, (f_1\cdots f_k, B)\cdot \mu).$$ We then have $g_i\in (\mathfrak{G}_{_\mathcal{G}})_1$ for all $1\le i\le n$, and $g = g_1\cdots g_n$, and so the ultragraph groupoid $\mathfrak{G}_{_\mathcal{G}}$ is unperforated, thus finishing the proof. 
\end{proof}

Consequently we obtain the following result which shows that ultragraph Leavitt path algebras can be realized as Cuntz-Pimsner rings.

\begin{thm}\label{ULPAasCP-ring2}
For a field $K$ and an ultragraph $\mathcal{G}$ without sinks, the ultragraph Leavitt path algebra $L_K(\mathcal{G})$ can be realized as a Cuntz-Pimsner ring.
\end{thm}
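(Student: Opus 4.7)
The plan is to stitch together three ingredients that are already in place: the Steinberg algebra model of $L_K(\mathcal{G})$, the unperforated structure of the ultragraph groupoid, and the general Cuntz-Pimsner realization theorem from \cite{cfhl:zgracpr}. None of these steps should require substantial new work, since the hard content has been pushed into the preceding results.

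First, I would invoke Theorem~\ref{ULPAisSteinAl} to obtain a graded $K$-algebra isomorphism $L_K(\mathcal{G}) \cong A_K(\mathfrak{G}_{_\mathcal{G}})$, where $\mathfrak{G}_{_\mathcal{G}}$ is the canonically $\mathbb{Z}$-graded ultragraph groupoid $\mathfrak{G}_{_\mathcal{G}} = \bigsqcup_{n\in\mathbb{Z}}(\mathfrak{G}_{_\mathcal{G}})_n$. This reduces the problem to showing that $A_K(\mathfrak{G}_{_\mathcal{G}})$ can be realized as a Cuntz-Pimsner ring in a graded-isomorphic way, since the graded isomorphism $\pi_{\mathcal{G}}$ transports any such realization back to $L_K(\mathcal{G})$.

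Next, I would apply Lemma~\ref{Unperforated}, which asserts precisely that $\mathfrak{G}_{_\mathcal{G}}$ is unperforated in the sense required by \cite[Corollary 4.6]{cfhl:zgracpr}: every element of $(\mathfrak{G}_{_\mathcal{G}})_n$ with $n>0$ factors as a product of $n$ elements of $(\mathfrak{G}_{_\mathcal{G}})_1$. Combined with the fact (already used implicitly in the Steinberg model) that $\mathfrak{G}_{_\mathcal{G}}$ is a Hausdorff ample $\mathbb{Z}$-graded groupoid, this puts us squarely in the hypothesis of that corollary.

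Finally, appealing to \cite[Corollary 4.6]{cfhl:zgracpr} I obtain a graded algebra isomorphism between $A_K(\mathfrak{G}_{_\mathcal{G}})$ and the Cuntz-Pimsner ring of the $A_K(\mathfrak{G}_{_\mathcal{G}})_0$-system $(A_K(\mathfrak{G}_{_\mathcal{G}})_{-1},A_K(\mathfrak{G}_{_\mathcal{G}})_{1},\psi)$, where $\psi$ is induced by the convolution product. Transporting this system along $\pi_{\mathcal{G}}$ via Theorem~\ref{ULPAisSteinAl}, one exhibits the corresponding $L_K(\mathcal{G})_0$-system $(L_K(\mathcal{G})_{-1},L_K(\mathcal{G})_{1},\psi')$ whose Cuntz-Pimsner ring is graded-isomorphic to $L_K(\mathcal{G})$. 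There is essentially no obstacle here; the only point that needs a brief verification is that the grading transported through $\pi_{\mathcal{G}}$ really is the canonical $\mathbb{Z}$-grading of $L_K(\mathcal{G})$, but this is immediate since $\pi_{\mathcal{G}}$ was shown to be a graded isomorphism in Theorem~\ref{ULPAisSteinAl}.
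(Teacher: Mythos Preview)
Your proof is correct and follows exactly the same route as the paper: invoke Lemma~\ref{Unperforated} to get unperforation, apply \cite[Corollary 4.6]{cfhl:zgracpr} to realize $A_K(\mathfrak{G}_{_\mathcal{G}})$ as a Cuntz-Pimsner ring, and then transport this realization to $L_K(\mathcal{G})$ via the graded isomorphism of Theorem~\ref{ULPAisSteinAl}. The paper's argument is just a terser version of what you wrote.
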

\begin{proof}
By Lemma~\ref{Unperforated} and \cite[Corollary 4.6]{cfhl:zgracpr} the Steinberg algebra $A_K(\mathfrak{G}_{_\mathcal{G}})$ can be realized as a Cuntz-Pimsner ring. Using this observation and Theorem~\ref{ULPAisSteinAl} we immediately receive the statement, finishing the proof.	
\end{proof}

The system we gave for the realization of ultragraph Leavitt path algebra $L_K(\mathcal{G})$ comes from the groupoid presentation of these algebras. We note that one could be able to 
give another $R$-system $(J,I,\psi)$ by setting  $R:=\mathrm{span} \{p_A:  A\in \mathcal G^{0}\}$, $I:=\mathrm{span} \{s_ep_A :  e\in \mathcal G^1, A\in \mathcal G^{0} \}$, and $J:=\mathrm{span} \{p_As_e^*: e\in E^1, A\in \mathcal G^{0}\}$ similar to Leavitt path algebras (see \cite[Example 3.6]{cfhl:zgracpr}) and ultragraph $C^*$-algebras \cite[\S6]{tomf:soua}).

%
%
%
%

\vskip 0.5 cm \vskip 0.5cm {

\end{document}